\pgfplotsset{compat=1.16} 
\newcommand{\dis}{\displaystyle}
\newcommand{\Spec}{\operatorname{Spec}}
\newcommand{\Hom}{\operatorname{Hom}}
\newcommand{\h}{\operatorname{height}}
\newcommand{\reg}{\operatorname{reg}}
\newcommand{\Cl}{\operatorname{Cl}}
\newcommand{\sig}{\sigma}
\newcommand{\Sig}{\Sigma}
\newcommand{\lcm}{\operatorname{lcm}}
\newcommand{\Z}{\mathbb Z}
\newcommand{\N}{\mathbb N}
\newcommand{\R}{\mathbb R}
\newcommand{\F}{\mathbb F}
\newcommand{\Pp}{\mathbb P}
\newcommand{\K}{{\mathbb K}}
\newcommand{\cC}{\cl C}
\newcommand{\cA}{\cl A}
\newcommand{\cl}[1]{\mathcal{#1}}
\newcommand{\la}{\langle}
\newcommand{\ra}{\rangle}
\def\aa{{\bf \alpha}}
\def\bb{\beta}
\def\a{{\bf a}}
\def\q{{\bf q}}
\def\uu{{\bf u}}
\def\vv{{\bf v}}
\def\x{{\bf x}}
\def\m{{\bf m}}
\def\b{{\bf b}}
\def\e{{\bf e}}
\def\ev{{\text{ev}}}
\newtheorem{tm}{Theorem}[section]
\newtheorem{corollary}[tm]{Corollary}
\newtheorem{proposition}[tm]{Proposition}
\newtheorem{definition}[tm]{Definition}
\newtheorem{example}[tm]{Example}
\newtheorem{remark}[tm]{Remark}
\numberwithin{equation}{section}
\begin{document}

% \title[short text for running head]{full title}
\title{Codes on Subgroups of Weighted Projective Tori}

%    Only \author and \address are required; other information is
%    optional.  Remove any unused author tags.

%    author one information
% \author[short version for running head]{name for top of paper}
\author{Mesut \c{S}ah\.{i}n}
\address{ Department of Mathematics,
  Hacettepe  University,
  Ankara, TURKEY}
\curraddr{}
\email{mesut.sahin@hacettepe.edu.tr}
\thanks{The authors are supported by T\"{U}B\.{I}TAK Project No:119F177}

%    author two information
\author{O\u{g}uz Yayla}
\address{Institute of Applied Mathematics, Middle East Technical University, Ankara, Turkey}
%\curraddr{}
\email{oguz@metu.edu.tr}
%\thanks{}

%    \subjclass is required.
\subjclass[2020]{Primary 14M25; 14G05; Secondary 94B27; 11T71}

\date{}

\dedicatory{}

%    Abstract is required.
\begin{abstract}
We obtain certain algebraic invariants relevant in studying codes on subgroups of weighted projective tori inside an $n$-dimensional weighted projective space. As application, we compute all the main parameters of generalized toric codes on these subgroups of tori lying inside a weighted projective plane of the form $\Pp(1,1,a)$. 
\end{abstract}

\maketitle

\section{Introduction} 

Let $\Pp(w_0,\dots,w_n)$ be the weighted projective space over an algebraic closure $\overline{\F}_q$ of a finite field $\F_q$, defined by some positive integers $w_0,\dots,w_n$. Without loosing generality, we assume that $n$ of these numbers have no common divisor. It is well known that the $\overline{\F}_q$-rational points of the weighted projective space $\Pp(w_0,\dots,w_n)$ can be represented by the Geometric Invariant Theory quotient $(\overline{\F}_q^{n+1}\setminus \{0\})/G$, where the group $G=\{(\lambda^{w_0},\dots,\lambda^{w_n}) :  \lambda\in \overline{\F}_q^* \}$. Therefore, a point is an orbit of the form $[p_0:\ldots:p_n]=\{(\lambda^{w_0}p_0,\dots,\lambda^{w_n}p_n)\: : \: \lambda\in \overline{\F}_q^* \}$ known as its homogeneous coordinates 
 as in the classical projective case.  Every $\F_q$-rational point has a representative from the set $\F_q^{n+1}$ in this correspondence.

For a thorough introduction to and a fairly good account on general properties of these spaces, see \cite{GhorpadeEtAllWPS,BRWPS,DolgachevWPS,RossiTerraciniWPS12}. It is known that $X=\Pp(w_0,\dots,w_n)$ is smooth if and only if it is the usual projective space $\Pp^{n}$, i.e., $w_0=\dots=w_n=1$.

The  coordinate ring $S=\F_q[x_0,\dots,x_n]$ over the field $\F_q$ of a weighted projective space $\Pp(w_0,\dots,w_n)$ is graded naturally by the numerical semigroup $\N \bb$ generated by $\deg (x_i)=w_i$, for $i=0,\dots,n$, where $\N$ denotes the set of natural numbers with $0$. Thus, we have the following decomposition:
$$S=\displaystyle \bigoplus_{\aa \in \N \bb } S_{\aa}, \text {where $S_{\aa}$ is the vector space spanned by the monomials of degree $\aa$}.$$

For any $\aa \in \N \bb$ and any subset $Y=\{P_1,\dots,P_N\}$ of $\F_q$-rational points  of $\mathbb{P}(w_0,\ldots,w_n)$, we have the following {\it evaluation map}:
\begin{equation} \label{e:evaluationMap}
\dis \ev_{Y}:S_\aa\to \F_q^N,\quad F \mapsto \left(F(P_1),\dots,F(P_N)\right).
\end{equation}
The image $\cl{C}_{\aa,Y}=\text{ev}_{Y}(S_\aa)$ is a linear code. The three basic parameters of $\cl C_{\aa,Y}$ are block-length which is $N$, the dimension which is $K=\dim_{\F_q}(\cl C_{\aa,Y})$, and the minimum distance $\delta=\delta(\cl{C}_{\aa,Y})$ which is the minimum of the number of nonzero components of vectors in $\cl C_{\aa,Y}\setminus \{0\}$. When $Y$ is the full set of $\F_q$-rational points of $\Pp(w_0,\dots,w_n)$, the code is known as the {\it weighted Reed-Muller code}. These codes are special cases of what is called generalized toric codes, see Section \ref{sec:prelim} for details.

Toric codes are introduced by Hansen in \cite{HansenAAECC} for the set $Y=T_X(\F_q)$ of $\F_q$-rational points of the dense torus $T_X$ of a toric variety $X=X_{\Sigma}$ and examined further in e.g. \cite{JoynerAAECC,DR2009,OrderBound,ToricCIcodesIS13,LittleFFA2013,7championCodes} producing some codes having the best known parameters. The vanishing ideal $I(Y)$ of $Y$ which is generated by homogeneous polynomials vanishing on $Y$ is a key in studying the parameters  of $\cl C_{\aa,Y}$. This is because, the kernel of $\ev_{Y}$ is nothing but the subspace $I_{\aa}(Y):=I(Y)\cap S_{\aa}$, and hence the code $\cl C_{\aa,Y}$ is isomorphic to the vector space $S_{\aa}/I_{\aa}(Y)$. Therefore, the dimension $K=\dim_{\F_q}(\cl C_{\aa,Y})$ is the value $H_Y(\aa)=\dim_{\F_q}S_{\aa}-\dim_{\F_q}I_{\aa}(Y)$ of the multigraded Hilbert function $H_Y$ of $Y$, see \cite{SaSop16JA}. Most recently, Nardi developed combinatorial methods for studying codes on the full set $Y=X(\F_q)$ of $\F_q$-rational points of a toric variety, see \cite{NardiHirzebruch,NardiProjToric}.

In literature, there are a few papers computing the main parameters of codes on weighted projective spaces. The main parameters of some weighted Reed-Muller codes are given explicitly for  the set $Y=X(\F_q)$ of $\F_q$-rational points of the weighted projective planes $X=\Pp(1,w_1,w_2)$ when $\aa$ is a multiple of the $\lcm(w_1,w_2)$. The main parameters have the most beautiful formulas in the special case of the plane $X=\Pp(1,1,a)$, see  \cite{GhorpadeEtAllWPS}.

 If $Y = T_X(\F_q)=\{[1:t_1:\ldots :t_n] \: | \: t_i \in \F_q^*, \text{ for all } i\in [n]:=\{1,\ldots,n\} \}$ is the set of $\F_q$-rational points of the  torus $T_X$ in $X=\Pp^{n}$ and $\aa \geq 1$, then the main parameters are given in \cite{SVPV2011MinDisProjTori}.
On the other hand, \cite{SM2013MinDisProjDegTori} studied the degenerate tori $$Y_Q=\{[1:t_1^{a_1}:\ldots :t_n^{a_n}] \: | \: t_i \in \F_q^*, \text{ for all } i\in [n]:=\{1,\ldots,n\} \}$$ 
 lying in the classical projective space $X=\Pp^n$, generalizing \cite{SVPV2011MinDisProjTori}. This is because, $Y_Q$ becomes the  set of $\F_q$-rational points of the projective torus in $\Pp^{n}$, once $a_i=1$, for all $i\in [n]$. The results in \cite{SM2013MinDisProjDegTori} show that $I(Y_Q)$ is a complete intersection of the binomials $x_i^{s_i}-x_0^{s_i}$, for $i\in [n]$, its degree is $|Y_Q|=s_1\cdots s_n$ and $a$-invariant is $a_Y=s_1+\cdots+s_n-n-1$, where $s_i=(q-1)/\gcd(q-1,a_i)$ for all $i\in [n]$. Some nice formulas are given for the other parameters as well.

The present paper considers the analogue of the same parametrization $Y_Q$ but in the weighted projective space $X=\Pp(1,w_1,\dots,w_n)$ with $a_i=w_i$ for all $i$. When $w_i=1$, for all $i$, our $Y_Q$ becomes the $\F_q$-rational points of the projective torus studied in \cite{SVPV2011MinDisProjTori}, as well. In the next section, we review basic terminology and theory needed in the sequel. We prove that $I(Y_Q)$ is a complete intersection ideal in Proposition \ref{prop:ideal_Y_Q}. We give a formula for the Hilbert function $H_{Y_Q}$ and compute the $a$-invariant of $Y_Q$ in Proposition \ref{P:HilbertF}. Theorem \ref{t:dimension} gives formulas for the length and dimension of the code $\cl C_{\aa,Y_Q}$.  The final section displays more explicit formulas for the dimension and minimum distance of the codes coming from the weighted projective plane $\Pp(1,1,a)$, see Theorem \ref{t:codesOnDegenToriP(1,1,a)}. 

\section{Preliminaries}
\label{sec:prelim}

Let $\Sigma\subseteq \R^n$ be a complete simplicial fan with rays generated by the lattice vectors $\vv_1,\dots,\vv_r$. Each cone $\sigma \in \Sigma$, defines an affine toric variety $U_{\sigma}=\Spec (\K[ \check{\sig}\cap \Z^n])$ over an algebraically closed field $\K$. Gluing these affine pieces, we obtain the toric variety $X_{\Sig}$ as an abstract variety over $\K$. There is a nice correspondence between polytopes in real $n$-space and projective toric varieties. Namely, every lattice polytope $\cl P$ gives rise to a so called normal fan $\Sigma_{\cl P}$ whose rays are spanned by the inner normal vectors of $\cl P$. Assuming $X_{\Sigma}$ has a free class group, the ray generator yields the following short exact sequence: 
$$\dis \xymatrix{ \mathfrak{P}: 0  \ar[r] & \Z^n \ar[r]^{\phi} & \Z^r \ar[r]^{{\bb}} & \Z^d \ar[r]& 0},$$  
where $\phi$ is the matrix  $[\vv_1\cdots \vv_r]^T$ and $d=r-n$ is the rank of the class group $\Cl X_{\Sig}\cong \Z^d$. There is an important lattice $L_{\bb}$ in $\Z^r$ that is isomorphic to $\Z^n$ via $\phi$, and is spanned by the columns $\uu_1,\dots, \uu_n$ of $\phi$. 

Applying $\Hom(-,\K^*)$ functor to $\mathfrak{P}$ gives the following dual short exact sequence:
$$\dis \xymatrix{ \mathfrak{P}^*: 1  \ar[r] & G \ar[r]^{i} & (\K^*)^r \ar[r]^{\pi} & (\K^*)^n \ar[r]& 1}, $$
where $\pi(P)=(\x^{\uu_1}(P), \dots , \x^{\uu_n}(P))$ and $\x^{\a}(P)=p_1^{a_{1}}\cdots p_r^{a_{r}}$ for $P=(p_1,\dots,p_r)\in (\K^*)^r$ and $\a=(a_1,\dots,a_r)\in \Z^r$. 

As proved by Cox in \cite{CoxRingToric}, the set $X(\K)$ of $\K$-rational points of the toric variety $X:=X_{\Sigma}$ is identified with the geometric quotient $[\K^r\setminus V(B)] /G$, where $B$ is the monomial ideal in $\K[x_1,\dots,x_r]$ generated by the monomials $\dis \x^{\hat{\sig}}=\Pi_{\rho_i \notin \sig}^{} \: x_i$ corresponding to cones $\sig \in \Sig$.
Hence, points of $X(\K)$ are orbits $[P]:=G\cdot P$, for $P\in \K^r\setminus V(B)$. When $\K=\overline{\F}_q$ is an algebraic closure of a finite field $\F_q$, the $\F_q$-rational points $[P]$ are represented by points $P$ from the set $\F_q^r\setminus V(B)$.

 The coordinate ring $S=\F_q[x_1,\dots,x_r]$ of $X$ is graded via the columns of the matrix $\bb$, i.e. $\deg_{\bb}(x_j)=\bb_j$, for $j=1,\dots,r$. There is a nice correspondence between subgroups of the torus $T_X(\F_q)\cong (\F_q^*)^r/G$ and $\bb$-graded \textit{lattice ideals} in $S$, defined by: 
$$I_L=\la \x^{\m^+}- \x^{\m^-} \: | \: \m=\m^{+}-\m^{-}\in L \ra,$$
where $L$ is a sublattice of $L_{\bb}$, see \cite{sahinFFA22}.
 In the case of the weighted projective space $\Pp(w_0,\dots,w_n)$, we have the row matrix $\bb=[w_0 \cdots w_n]$.
\begin{figure}[htb!]
\centering
\begin{minipage}{0.5\textwidth}
  \centering
\begin{tikzpicture}[scale=0.95]

\draw[dashed,lightgray][line width=0mm] (-1,-1.5)  -- (-1,3.5);
\draw[dashed,lightgray][line width=0mm] (0,-1.5)  -- (0,3.5);
\draw[dashed,lightgray][line width=0mm] (1,-1.5)  -- (1,3.5);
\draw[dashed,lightgray][line width=0mm] (2,-1.5)  -- (2,3.5);
\draw[dashed,lightgray][line width=0mm] (3,-1.5)  -- (3,3.5);
\draw[dashed,lightgray][line width=0mm] (4,-1.5)  -- (4,3.5);
\draw[dashed,lightgray][line width=0mm] (-1,2)  -- (4,2);
\draw[dashed,lightgray][line width=0mm] (-1,3)  -- (4,3);
\draw[dashed,lightgray][line width=0mm] (-1,1)  -- (4,1);
\draw[dashed,lightgray][line width=0mm] (-1,0)  -- (4,0);
\draw[dashed,lightgray][line width=0mm] (-1,-1)  -- (4,-1);
\node[inner sep=0,anchor=west,{scale=0.7}] (note1) at (-0.9,2.2) {$(0,2)$};
\node[inner sep=0,anchor=west,{scale=0.7}] (note1) at (-0.9,0.2) {$(0,0)$};
\node[inner sep=0,anchor=west,{scale=0.7}] (note1) at (3.1,0.2) {$(3,0)$};
\draw (0,0) node[circle,fill,blue,inner sep=1pt] {};
\draw (0,2) node[circle,fill,blue,inner sep=1pt] {};
\draw (3,0) node[circle,fill,blue,inner sep=1pt] {};
\draw[thick,->][line width=0.4mm] (0,4/3) -- (0.5,4/3);
\draw[thick,->][line width=0.4mm] (1,0) -- (1,0.5);
\draw[thick,->][line width=0.4mm] (1,4/3) -- (1-0.4,4/3-0.6);

\draw (0,0)-- (3,0)--(0,2)--cycle;
\end{tikzpicture} 
  \caption{The polygon $\cl P$}
  \label{fig:polygon}
\end{minipage}%
\begin{minipage}{0.5\textwidth}
  \centering
\begin{tikzpicture}[scale=0.95]
\draw[dashed,lightgray][line width=0mm] (0,-3.5)  -- (0,1.5);
\draw[dashed,lightgray][line width=0mm] (-3,0)  -- (2,0);
\draw[dashed,lightgray][line width=0mm] (-3,-3.5)  -- (-3,1.5);
\draw[dashed,lightgray][line width=0mm] (-2,-3.5)  -- (-2,1.5);
\draw[dashed,lightgray][line width=0mm] (-1,-3.5)  -- (-1,1.5);
\draw[dashed,lightgray][line width=0mm] (1,-3.5)  -- (1,1.5);
\draw[dashed,lightgray][line width=0mm] (2,-3.5)  -- (2,1.5);
\draw[dashed,lightgray][line width=0mm] (-3,1)  -- (2,1);
\draw[dashed,lightgray][line width=0mm] (-3,-1)  -- (2,-1);
\draw[dashed,lightgray][line width=0mm] (-3,-2)  -- (2,-2);
\draw[dashed,lightgray][line width=0mm] (-3,-3)  -- (2,-3);
\draw[thick,->][line width=0.4mm] (0,0) -- (-2,-3);
\draw[thick,->][line width=0.4mm] (0,0) -- (0,1);
\draw[thick,->][line width=0.4mm] (0,0) -- (1,0);
\draw[red][line width=0.4mm] (-2,-3)  -- (-2.4,-3.6);
\draw (-2,-3) node[circle,fill,blue,inner sep=1pt] {};
\draw[red][line width=0.4mm] (0,1)  -- (0,1.5);
\draw (0,1) node[circle,fill,blue,inner sep=1pt] {};
\draw[red][line width=0.4mm] (1,0)  -- (2,0);
\draw (1,0) node[circle,fill,blue,inner sep=1pt] {};
\node[inner sep=0,anchor=west,{scale=0.7}] (note1) at (-0.9,1.2) {$(0,1)$};
\node[inner sep=0,anchor=west,{scale=0.7}] (note1) at (-0.9,0.2) {$(0,0)$};
\node[inner sep=0,anchor=west,{scale=0.7}] (note1) at (1.1,0.2) {$(1,0)$};
\node[inner sep=0,anchor=west,{scale=0.7}] (note1) at (-1.7,-3.2) {$(-2,-3)$};
\end{tikzpicture}
  \caption{The fan $\Sigma_{\cl P}$}
  \label{fig:normalfan}
\end{minipage}
\end{figure}

\begin{example} \label{ex:P(123)} Let $X=\Pp(1,2,3)$ be the weighted projective space over $\overline{\F}_3$, which corresponds to the normal fan $\Sigma_{\cl P}$ depicted in Figure \ref{fig:normalfan} of the polygon $\cl P$ depicted in Figure \ref{fig:polygon}. Then, the first sequence above becomes:
$$\dis \xymatrix{ \mathfrak{P}: 0  \ar[r] & \Z^2 \ar[r]^{\phi} & \Z^3 \ar[r]^{\beta}& \Z \ar[r]& 0},$$ 
where $$\phi=\begin{bmatrix}
 -2 & 1 & 0 \\
-3 & 0 &  1
\end{bmatrix}^T   \quad  \mbox{ and} \quad \beta=\begin{bmatrix}
1 & 2 & 3
 \end{bmatrix}.$$ 
The coordinate ring $S= \F_3[x,y,z]$ is multigraded via
 $$\deg_{\bb}(x)=1,\ \deg_{\bb}(y)=2 \ \mbox{ and }  \deg_{\bb}(z)=3.$$
Since $B=\la x,y,z\ra$, we remove the set $V(B)=V(x,y,z)=\{0\}$ and therefore obtain the quotient representation $X(\F_3)=(\F_3^3 \setminus 0)/G$, where  
 $$G=\{(x,y,z)\in (\overline{\F}_3^*)^3 \:|\: x^{-2}y=x^{-3}z=1 \}=\{(\lambda,\lambda^{2},\lambda^3) \:|\: \lambda \in \overline{\F}_3^*\}$$
is the zero locus in $(\overline{\F}_3^*)^3$ of the toric ideal:
 \begin{align*}
I_{L_\bb}:=\langle \x^{\uu}-\x^{\vv} \: : \: \uu,\vv \in \N^r \:\mbox{and} \: \bb\uu=\bb\vv  \rangle =\langle x^{2}-y, x^{3}-z\rangle.   
\end{align*}
One needs to be careful about the field over which the group $G$ is considered. Even though we use representative from the affine space  $\F_3^3$ recall that the equivalence of points in an orbit is determined via the subgroup $G$ of $(\overline{\F}_3^*)^3$. For instance, the points $[0:0:1]$ and $[0:0:2]$ are the same as $\F_3$-rational points, since there is $\lambda \in \overline{\F}_3^*$ such that $\lambda^2=2$ and thus we have $(0,0,2)=(\lambda,\lambda,\lambda^2)\cdot (0,0,1)$. But, these two points would be different if we considered equivalence with respect to the existence of $\lambda \in \F_3^*$ such that $\lambda^2=2$, since $\lambda^2=1$ for all $\lambda \in \F_3^*=\{1,2\}$.   
 \end{example}

Let us recall basics of linear codes. Our alphabet is the finite field $\F_q$ with $q$ elements. A \textit{linear code} is a subspace ${\cC}\subset \F_q^N$ whose elements are referred to as the  \textit{codewords}.

\begin{definition}

\textit{ The parameters of a linear code ${\cC}\subset \F_q^N$ are as follows:} 
\begin{itemize}
\item $N$ is the  \textit{length} of $\cC$, \\
\item $K=\dim_{\F_q} \cC$ is the \textit{dimension}  of $\cC$ as a subspace (a measure of \textit{efficiency}),\\
\item  $\delta$ is the \textit{minimum distance} of $\cC$ (a measure of \textit{reliability}), which is the minimum of all Hamming distances between different codewords in $\cC$, where  the Hamming distance between two codewords $c_1$ and $c_2$ is 
\[dist(c_1,c_2):=\# \text{of non-zero entries in } c_1-c_2.\] So,
$$\delta(\cC)=\min_{c\in{\cC}\setminus\{0\}}(\# \text{of non-zero entries in } c).$$ 
\end{itemize}
\end{definition}

As in Equation \eqref{e:evaluationMap}, we get the so called \textit{generalized toric codes} by evaluating homogeneous polynomials $F\in S_{\aa}$ of degree $\aa$ at some subset $Y$ of $\F_q$-rational points in a toric variety $X$.

\begin{definition} Let $Y\subseteq X$ be a subset of a toric variety $X$. Its vanishing ideal $I(Y)$ is the (homogeneous) ideal in $S$ generated by homogeneous polynomials vanishing on $Y$. The \it{multigraded Hilbert function} of $Y$ is 
\[H_{Y}(\aa):=\dim_{\K}{S}_{\aa}-\dim_{\K} I_{\aa}(Y).\]
\end{definition}

Since, the kernel of the evaluation map in Equation (\ref{e:evaluationMap}) consists of the homogeneous polynomials of degree $\aa$ whose image is the point $(0,\dots,0)\in \F_q^N$, it follows that the dimension of the code $\cC_{\aa,Y}$ equals the value $H_Y(\aa)$ of the Hilbert function of $Y$. When $Y$ lies in the torus $T_X$, the variables $x_i$ are all non zero-divisors in the quotient ring $S/I(Y)$, and thus the Hilbert function does not decrease as we state in the following result. Below we use the partial ordering $\preceq $, where $\aa \preceq  \aa'$ if
$\aa' - \aa \in \N \bb$. Notice that this is the usual ordering in $\N$ for $X:=\Pp(1,w_1\dots,w_{n})$ as $\N \bb=\N$ in this case.
\begin{proposition}{\cite[Corollary 3.18]{SaSop16JA}}
Let $Y\subset T_X$. The dimension $H_Y(\aa)$ of
$\cC_{\aa,Y}$ is non-decreasing in the sense that $H_Y(\aa)\leq H_Y(\aa')$ for all $\aa \preceq \aa'$.
\end{proposition}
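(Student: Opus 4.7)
The plan is to construct an injective $\F_q$-linear map from $S_{\aa}/I_{\aa}(Y)$ into $S_{\aa'}/I_{\aa'}(Y)$; passing to dimensions will then immediately yield $H_Y(\aa) \leq H_Y(\aa')$. The idea is to multiply by a suitably chosen monomial and then exploit the fact that each variable $x_i$ is a non-zero-divisor on $S/I(Y)$ whenever $Y$ lies inside the dense torus.

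Since $\aa \preceq \aa'$ means $\aa'-\aa \in \N\bb$, I can pick $\mathbf{c}=(c_1,\dots,c_r)\in \N^r$ with $\bb\,\mathbf{c}=\aa'-\aa$, so that $\deg_{\bb}(\x^{\mathbf{c}})=\aa'-\aa$. Multiplication by $\x^{\mathbf{c}}$ is a graded $\F_q$-linear map $S_{\aa}\to S_{\aa'}$ sending $I_{\aa}(Y)$ into $I_{\aa'}(Y)$, hence inducing $\bar{\mu}:S_{\aa}/I_{\aa}(Y)\to S_{\aa'}/I_{\aa'}(Y)$. Injectivity of $\bar{\mu}$ follows at once if each $x_i$ is a non-zero-divisor on $S/I(Y)$, since then the product $\x^{\mathbf{c}}$ is too.

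To verify this non-zero-divisor property, suppose $x_i f\in I(Y)$ for some homogeneous $f\in S$. For any $\F_q$-rational point $[P]\in Y$, choose a representative $P=(p_1,\dots,p_r)$; since $Y\subset T_X$, every $p_j$ is nonzero. Then $0=(x_if)(P)=p_i\,f(P)$ and $p_i\neq 0$ force $f(P)=0$, so $f\in I(Y)$, as required. The only subtlety I anticipate is checking that vanishing of a polynomial at $[P]$ is independent of the chosen representative, but this is automatic since $f$ is $\bb$-homogeneous and the $G$-action only rescales its values by a character. Given this, the dimension count in the injection $\bar\mu$ closes the argument.
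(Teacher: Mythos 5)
Your proof is correct and takes essentially the same route the paper indicates (and that the cited source \cite{SaSop16JA} uses): since $Y\subset T_X$ every representative of a point of $Y$ has all coordinates nonzero, so each $x_i$ is a non-zero-divisor on $S/I(Y)$, and multiplication by a monomial of degree $\aa'-\aa$ induces an injection $S_{\aa}/I_{\aa}(Y)\hookrightarrow S_{\aa'}/I_{\aa'}(Y)$. Nothing further is needed.
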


On the other hand, the minimum distance behaves the opposite way as the following points out:

\begin{proposition}{\cite[Proposition 2.22]{Sahin20INDAM}}
Let $Y\subset T_X$. The minimum distance of
$\cC_{\aa,Y}$ is non-increasing in the sense that $\delta(\cC_{\aa,Y}) \geq \delta(\cC_{\aa',Y})$ for all $\aa \preceq \aa'$.
\end{proposition}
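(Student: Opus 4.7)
The plan is to exhibit, starting from a minimum-weight nonzero codeword of $\cC_{\aa,Y}$, a nonzero codeword of $\cC_{\aa',Y}$ of the same Hamming weight; this immediately forces $\delta(\cC_{\aa',Y})\leq \delta(\cC_{\aa,Y})$. The engine will be multiplication by a suitable monomial of degree $\aa'-\aa$, exploiting the crucial fact that points in $T_X$ have all coordinates nonzero, so multiplying by a monomial neither creates nor destroys zero evaluations.

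Concretely, I would pick $F\in S_{\aa}\setminus I_{\aa}(Y)$ with $\ev_Y(F)$ having exactly $\delta(\cC_{\aa,Y})$ nonzero entries. Since $\aa\preceq \aa'$, we have $\aa'-\aa\in \N\bb$, so there exists a vector $\b\in \N^r$ with $\deg_{\bb}(\x^{\b})=\aa'-\aa$; here one uses that $\N\bb$ is exactly the set of degrees attained by monomials in $S$. Setting $F':=F\cdot \x^{\b}\in S_{\aa'}$, I claim $\ev_Y(F')$ has the same support as $\ev_Y(F)$. Indeed, a representative of $[P]\in Y\subset T_X$ can be taken from $(\F_q^*)^r$, since points of the dense torus have all homogeneous coordinates nonzero by definition; therefore $\x^{\b}(P)\in \F_q^*$ for every $P$ representing a point of $Y$, and hence $F'(P)=0$ if and only if $F(P)=0$. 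In particular, $F'\notin I_{\aa'}(Y)$ because $F\notin I_{\aa}(Y)$, so $F'$ gives a genuine nonzero codeword of $\cC_{\aa',Y}$ whose Hamming weight equals $\delta(\cC_{\aa,Y})$, finishing the argument.

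The only mildly subtle point, and the step I would write out most carefully, is the well-definedness of the comparison $F'(P)$ versus $F(P)$ across different representatives of the orbit $[P]$ under $G$. Since $F$ and $F'$ are $\bb$-homogeneous, their values depend on the representative only up to a common nonzero scalar, so the supports of the evaluation vectors are independent of choices. Likewise, I would note that the monomial $\x^{\b}$ can always be chosen: in the weighted projective case $X=\Pp(1,w_1,\dots,w_n)$, where $\N\bb=\N$, one may simply take $\x^{\b}=x_0^{\aa'-\aa}$, making the monomial choice entirely transparent. The argument then reduces to the observation that the first coordinate of every point of $T_X(\F_q)$ can be taken to be $1$, so multiplication by $x_0^{\aa'-\aa}$ preserves the support of the evaluation vector verbatim.
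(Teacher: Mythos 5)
Your argument is correct and is essentially the standard proof of this statement (the paper only cites \cite[Proposition 2.22]{Sahin20INDAM}, whose proof proceeds exactly this way): multiply a minimum-weight $F\in S_{\aa}\setminus I_{\aa}(Y)$ by a monomial of degree $\aa'-\aa$, and use that points of $T_X$ have all coordinates nonzero so the support of the evaluation vector is preserved. You also correctly handle the two genuine subtleties — the existence of the monomial from $\aa'-\aa\in\N\bb$ and the independence of the support from the choice of orbit representatives — so nothing is missing.
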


These two results are not that surprising as we have the following well known relation between these two parameters given by the Singleton's bound:

$$\delta(\cC_{\aa,Y}) + K(\cC_{\aa,Y}) \leq N(\cC_{\aa,Y})+1.$$

There is an algebro-geometric invariant of the zero-dimensional subvariety $Y\subset X(\F_q)$ used to eliminate trivial codes which we introduce now.

\begin{definition} 
 The \textit{ multigraded regularity} of $Y$, denoted $\reg(Y)$, is the set of $\aa\in  \N \bb$ for which $H_Y(\aa)=|Y|$, the length of $\cC_{\aa,Y}$.
\end{definition} 

\begin{proposition} \label{trivialCodes}
If $\aa \in \reg(Y)$ then $\delta(\cC_{\aa,Y})=1$. 
\end{proposition}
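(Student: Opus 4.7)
The plan is very short because the statement essentially says ``a code that fills up the ambient space contains a weight-one vector.'' I would argue as follows.

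First, I would unpack the definition. By hypothesis $\aa \in \reg(Y)$, so $H_Y(\aa)=|Y|=N$. Recall from the discussion after the evaluation map that the code $\cC_{\aa,Y}$ is isomorphic to $S_\aa/I_\aa(Y)$, and in particular
$$\dim_{\F_q}\cC_{\aa,Y} \;=\; \dim_{\F_q} S_\aa - \dim_{\F_q} I_\aa(Y) \;=\; H_Y(\aa).$$
Hence $\dim_{\F_q}\cC_{\aa,Y} = N$, i.e.\ the code equals the full ambient space $\F_q^N$.

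Second, I would deduce the minimum distance. Since $\cC_{\aa,Y}=\F_q^N$, every vector of $\F_q^N$ is a codeword, in particular each standard basis vector $e_i=(0,\dots,0,1,0,\dots,0)$, which has exactly one nonzero entry. Thus there is a nonzero codeword of Hamming weight $1$, forcing $\delta(\cC_{\aa,Y})\le 1$. Combined with the trivial lower bound $\delta(\cC_{\aa,Y})\ge 1$ for any nonzero code, we conclude $\delta(\cC_{\aa,Y})=1$.

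There is no real obstacle here; the only thing to be careful about is making sure the equivalence ``$\dim \cC_{\aa,Y}=H_Y(\aa)$'' is cited correctly from the preliminaries, and noting that the hypothesis $H_Y(\aa)=|Y|$ automatically forces $N\ge 1$ so that the notion of minimum distance is nontrivial. One could alternatively prove surjectivity of $\ev_Y$ directly (image dimension equals codomain dimension over a field implies surjectivity) and then exhibit $e_i$ as the image of some $F\in S_\aa$, but this is the same argument rephrased.
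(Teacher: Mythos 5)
Your proof is correct and follows essentially the same route as the paper: both start from $H_Y(\aa)=|Y|$ to conclude that the dimension of $\cC_{\aa,Y}$ equals its length, and then combine $\delta\le 1$ with the trivial bound $\delta\ge 1$. The only cosmetic difference is in how $\delta\le 1$ is obtained --- the paper invokes the Singleton bound, while you observe directly that the code is all of $\F_q^N$ and so contains a standard basis vector $e_i$ of weight one; both steps are valid.
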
 
\begin{proof}
Let $\aa \in \reg(Y)$. Then, the dimension of the code is nothing but the length. So, the claim follows from the Singleton bound, as we always have $\delta(\cC_{\aa,Y}) \geq 1.$ 
\end{proof}

 The multigraded regularity set is determined by a number also known as the $a$-invariant in the case of a weighted projective space. In order to state the precise result, we first recall some relevant concepts.

When $I$ is a weighted graded ideal, the quotient ring $S/I$ inherits this grading as well and has a decomposition $\dis S/I=\bigoplus_{\aa \in \cA} ({S/I})_{\aa}$, where $({S/I})_{\aa}=S_{\aa}/I_{\aa}$ is a finite dimensional vector space spanned by monomials of degree $\aa$ in the numerical semigroup $ \N \bb=\N\{w_0,\dots,w_n\}$, which do not belong to $I$. This gives rise to the weighted Hilbert function and series defined respectively by 
$$H_{S/I}(\aa) :=\dim_{\K}(S/I)_{\aa}=\dim_{\K}{S}_{\aa}-\dim_{\K} I_{\aa}$$
$$\mbox{and}\quad HS_{S/I}(t) :=\sum_{\aa \in  \N \bb} H_{S/I}(\aa) t^{\aa}.$$
Furthermore, the weighted Hilbert series has a rational function representation, that is, we have
\begin{equation}\label{e:HilbSeries}
HS_{S/I}(t)=\frac{p_{S/I}(t)}{(1-t^{w_0})\cdots (1-t^{w_n})},
\end{equation}
for a unique polynomial $p_{S/I}(t)$ with integer coefficients, see \cite[Chapter 8]{CombComAlgBook}.
\begin{proposition} \cite[Proposition 3.12]{SaSop16JA} \label{p:a-invariant}
Let $Y\subset T_X$ for $X=\Pp(w_0,\dots,w_{n})$ with $w_0=1$. Then, the integer $a_Y=\deg(p_{S/I(Y)}(t))-w_0-\cdots-w_n$ satisfies $\reg(Y)=1+a_Y+\N$.
\end{proposition}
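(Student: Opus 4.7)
The plan is to exploit the hypothesis $Y\subset T_X$ by reducing modulo the weight-one variable $x_0$, producing an Artinian quotient whose Hilbert polynomial encodes $a_Y$.

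First I would note that since $Y$ lies in the torus, $x_0$ vanishes at no point of $Y$ and is therefore a non-zero-divisor on $R:=S/I(Y)$. This gives the graded short exact sequence
$$0\to R(-1)\xrightarrow{\;\cdot x_0\;}R\to R/x_0R\to 0,$$
so in particular $HS_{R/x_0R}(t)=(1-t)\,HS_R(t)$. Because $R$ is one-dimensional and $x_0$ is regular on it, $R/x_0R$ is Artinian, and therefore $h(t):=HS_{R/x_0R}(t)$ is an honest polynomial in $t$. Combining with the rational expression in~\eqref{e:HilbSeries} yields
$$h(t)=\frac{p_{S/I(Y)}(t)}{(1-t^{w_1})\cdots(1-t^{w_n})},$$
and taking degrees gives
$$\deg h=\deg p_{S/I(Y)}-(w_1+\cdots+w_n)=a_Y+w_0=a_Y+1.$$

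Next I would translate $\deg h$ into the stabilization index of $H_Y$. By the earlier monotonicity proposition, $H_Y$ is non-decreasing on $\N$, and it stabilizes at $|Y|$ since $Y$ is finite; let $\aa_0$ be the smallest $\aa$ with $H_Y(\aa)=|Y|$, so monotonicity gives $\reg(Y)=\aa_0+\N$ and the remaining task is to show $\aa_0=\deg h$. Splitting the Hilbert series at $\aa_0$,
$$HS_R(t)=\sum_{\aa=0}^{\aa_0-1}H_Y(\aa)\,t^{\aa}+\frac{|Y|\,t^{\aa_0}}{1-t},$$
multiplying by $(1-t)$, and reading off coefficients, I find that the coefficient of $t^{\aa_0}$ in $h(t)$ equals $|Y|-H_Y(\aa_0-1)$, which is strictly positive by minimality of $\aa_0$, while all coefficients of $t^{j}$ with $j>\aa_0$ vanish. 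Hence $\deg h=\aa_0=1+a_Y$, giving $\reg(Y)=(1+a_Y)+\N$.

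The main obstacle I expect is the algebraic infrastructure in the first step, namely justifying in the weighted graded setting that $R/x_0R$ is genuinely finite-dimensional so that $h(t)$ is a polynomial and that $\prod_{i=1}^{n}(1-t^{w_i})$ really divides $p_{S/I(Y)}(t)$ in $\Z[t]$. Both reduce to the standard fact that killing a homogeneous non-zero-divisor on a one-dimensional graded ring produces an Artinian quotient, but one must verify that the weighted grading does not disrupt the usual dimension-counting arguments; one small edge case ($\aa_0=0$, i.e.\ $|Y|=1$) should also be checked directly.
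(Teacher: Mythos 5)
Your argument is correct. The paper itself gives no proof of this proposition (it is quoted from \cite[Proposition 3.12]{SaSop16JA}), so there is nothing internal to compare against, but your route is the standard one and the one the cited reference effectively uses in its multigraded generality: since $Y$ lies in the torus, $x_0$ is a nonzerodivisor on $R=S/I(Y)$, and because $w_0=1$ the Artinian reduction $R/x_0R$ has Hilbert series $h(t)=(1-t)HS_R(t)=p_{S/I(Y)}(t)/\prod_{i=1}^{n}(1-t^{w_i})$, a genuine polynomial of degree $\deg p_{S/I(Y)}-(w_1+\cdots+w_n)=a_Y+1$; your coefficient computation $h_j=H_Y(j)-H_Y(j-1)$ then correctly identifies $\deg h$ with the least $\aa_0$ at which $H_Y$ reaches its stable value. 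The only point you lean on without proof is that the stable value is actually $|Y|$ (equivalently, that $\ev_Y$ is eventually surjective); this is standard here because $w_0=1$ lets one normalize each point of $Y\subset T_X$ to have $x_0$-coordinate $1$ and homogenize interpolating polynomials by powers of $x_0$, but it is worth stating, since it is exactly where the hypotheses $Y\subset T_X$ and $w_0=1$ enter a second time. The edge case $\aa_0=0$ (i.e.\ $|Y|=1$) works as you say.
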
 

A nice formula for the $a$-invariant is given for the $\F_q$-rational points of the torus $T_X$ when $X$ is a weighted projective space.

\begin{proposition} \cite[ Corollary 3.9]{CodesWPS} \label{p:aInvariantWPTori}
If $Y=T_X(\F_q)$ for $X=\Pp(w_0,\dots,w_{n})$ and $g(\N \bb)$ is the Frobenius number of the numerical semigroup $\N \bb=\N\{w_0,\dots,w_n\}$, then 
$$a_Y=(q-2)[w_0+\cdots+w_n+g(\N \bb)]+g(\N \bb).$$ 
 
\end{proposition}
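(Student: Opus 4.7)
The plan is to compute the weighted Hilbert series of $S/I(Y)$ explicitly, read off the numerator polynomial $p_{S/I(Y)}(t)$, and then apply the definition $a_Y = \deg p_{S/I(Y)}(t) - (w_0 + \cdots + w_n)$.

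The first step is to identify $I(Y)$ as the lattice ideal $I_L$ with $L = (q-1)L_\bb$, where $L_\bb = \ker(\bb \colon \Z^{n+1} \to \Z)$. For a $\bb$-homogeneous binomial $x^{\m^+} - x^{\m^-}$ (so $\m \in L_\bb$), vanishing on $Y$ is equivalent to $P^{\m} = 1$ on every lift $P \in (\F_q^*)^{n+1}$ of a point of $T_X(\F_q)$. Since every $P \in (\F_q^*)^{n+1}$ already represents an $\F_q$-rational orbit, the collection of such lifts is all of $(\F_q^*)^{n+1}$; the condition therefore forces $\m \in (q-1)\Z^{n+1}$ and hence $\m \in (q-1)L_\bb$. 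That $I(Y)$ is actually generated by these binomials is a standard fact about vanishing ideals of subgroups of $T_X(\F_q)$.

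Consequently two monomials $x^{\aa}, x^{\aa'}$ are equal in $S/I(Y)$ iff they share the same $\bb$-degree and $\aa \equiv \aa' \pmod{q-1}$ componentwise. Hence every class has a unique reduced representative $\mathbf{c} \in \{0,1,\ldots,q-2\}^{n+1}$, and this class contributes to the degree-$d$ part of $S/I(Y)$ precisely when $d - \bb\mathbf{c} \in (q-1)\N\bb$. Summing over classes yields
\[
HS_{S/I(Y)}(t) \;=\; \sum_{\mathbf{c} \in \{0,\ldots,q-2\}^{n+1}} t^{\bb\mathbf{c}}\sum_{s \in \N\bb} t^{(q-1)s} \;=\; \frac{\prod_{i=0}^n (1 - t^{(q-1)w_i})}{\prod_{i=0}^n (1 - t^{w_i})} \cdot HS_{\N\bb}(t^{q-1}).
\]
Writing $HS_{\N\bb}(t) = \frac{1}{1-t} - H(t)$ with $H(t) = \sum_{r \in \N \setminus \N\bb,\, r\geq 0} t^r$ a polynomial of degree $g(\N\bb)$ (with $H \equiv 0$ and $g(\N\bb) = -1$ when $\N\bb = \N$), the numerator becomes
\[
p_{S/I(Y)}(t) \;=\; \frac{\prod_{i=0}^n (1 - t^{(q-1)w_i})}{1 - t^{q-1}} \;-\; \prod_{i=0}^n (1 - t^{(q-1)w_i}) \cdot H(t^{q-1}).
\]
The first summand has degree $(q-1)\bigl(\sum_i w_i - 1\bigr)$, while the second has degree $(q-1)\bigl(\sum_i w_i + g(\N\bb)\bigr)$ with leading coefficient $(-1)^n \neq 0$; the latter dominates (and when $g(\N\bb) = -1$ the second term vanishes but the first already has the asserted degree), so $\deg p_{S/I(Y)}(t) = (q-1)\bigl(w_0 + \cdots + w_n + g(\N\bb)\bigr)$. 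Substituting into $a_Y = \deg p_{S/I(Y)}(t) - \sum_i w_i$ and simplifying $(q-1)\bigl(\sum_i w_i + g(\N\bb)\bigr) - \sum_i w_i = (q-2)\bigl(\sum_i w_i + g(\N\bb)\bigr) + g(\N\bb)$ produces the claimed formula.

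The main obstacle I anticipate is the lattice identification $I(Y) = I_{(q-1)L_\bb}$, which requires carefully relating $\F_q$-rational torus points of the weighted projective space with their lifts in $(\F_q^*)^{n+1}$; once this identification and the resulting monomial basis of $S/I(Y)$ in terms of reduced representatives are in hand, the subsequent Hilbert series manipulation and the degree-plus-leading-coefficient bookkeeping are routine.
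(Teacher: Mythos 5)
The paper offers no proof of this statement---it is imported verbatim from Dias--Neves \cite[Corollary 3.9]{CodesWPS}---so there is no internal argument to compare against; judged on its own, your derivation is correct and essentially complete. The identification $I(Y)=I_{(q-1)L_{\bb}}$ that you worry about is exactly the paper's quoted Corollary 4.14(ii) of \cite{sahin18}, and your direct verification (a $\bb$-homogeneous binomial vanishes on $Y$ iff its exponent vector lies in $L_{\bb}\cap(q-1)\Z^{n+1}=(q-1)L_{\bb}$) is sound. The combinatorial core also checks out: the pair consisting of the componentwise residue $\mathbf{c}$ modulo $q-1$ and the $\bb$-degree is a complete invariant for monomial classes modulo $(q-1)L_{\bb}$, each nonzero fine-graded piece of $S/I_L$ is one-dimensional (a standard fact about lattice ideals you use tacitly and should perhaps state), and the resulting factorization of the Hilbert series, the degree comparison between the two summands of $p_{S/I(Y)}$ (they tie exactly when $\N\bb=\N$, where $H\equiv 0$), and the final algebra $(q-1)(W+g)-W=(q-2)(W+g)+g$ are all correct; the formula also specializes correctly to $a_Y=n(q-2)-1$ for $\Pp^n$. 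Two minor points to tighten: (i) Proposition \ref{p:a-invariant} of the paper is stated only for $w_0=1$, so for general weights you should invoke the general definition of the $a$-invariant as the degree of $HS_{S/I(Y)}(t)$ as a rational function (equivalently $\deg p_{S/I(Y)}-\sum_i w_i$), which is what you actually use; (ii) the leading coefficient of $\prod_{i=0}^{n}(1-t^{(q-1)w_i})H(t^{q-1})$ is $(-1)^{n+1}$, becoming $(-1)^{n}$ only after absorbing the minus sign in front of that summand---harmless, since all that matters is that it is nonzero and that no cancellation with the lower-degree summand can occur.
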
  

There are subgroups of the torus $T_X$ referred to as degenerate tori which we briefly discuss now. 

\begin{definition}
The following subgroup
$Y_A =\{[t_1^{a_1}:\ldots:t_r^{a_r}] \: : \: t_i \in \F_q^* \}$ of the torus $T_X$ is called a \textit{degenerate torus}, lying inside a toric variety $X_{\Sigma}$,  for any positive integers $a_1,\dots,a_r$, where $r$ is the number of rays in the fan $\Sigma$. 
\end{definition}

If $\F_q^*=\la \eta \ra$, every $t_i \in \F_q^*$ is of the form $t_i=\eta^{k_i}$, for some $0 \leq k_i \leq q-2$. Let $d_i=| \eta^{a_i} |$ and $D=diag(d_1,\dots,d_r)$.

\begin{proposition} \cite[Corollary 3.13 (ii)]{HLRVLZ2013} \label{p:aInvariantPTori}
If  $Y=Y_A$ is a complete intersection in $X=\Pp^{r-1} $ and $g:=\gcd(d_1,\dots,d_r)$ so that $d'_1=d_1/g,\dots, d'_r=d_r/g$ generate a numerical semigroup $\N D'$ with the Frobenius number $g(\N D')$, then
$$1+a_Y=g\cdot g(\N D')+d_1+\cdots+d_r-(r-1).$$ 
 
\end{proposition}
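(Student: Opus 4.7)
The plan is to derive the $a$-invariant from a closed form for the Hilbert series (available thanks to the complete-intersection hypothesis) and then match the resulting sum of generator degrees to the right-hand side by invoking the classical formula for the Frobenius number of a complete-intersection numerical semigroup.

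I begin by applying Proposition \ref{p:a-invariant} with $X=\Pp^{r-1}$ and all weights $w_i=1$, which reduces the problem to computing the degree of the numerator $p_{S/I(Y_A)}(t)$. Since $Y_A$ is a complete intersection of codimension $r-1$ in $\Pp^{r-1}$, its vanishing ideal $I(Y_A)$ is generated by a regular sequence of $r-1$ homogeneous binomials $f_1,\dots,f_{r-1}$ of standard degrees $e_1,\dots,e_{r-1}$, and the Koszul resolution yields
$$HS_{S/I(Y_A)}(t)=\frac{\prod_{j=1}^{r-1}(1-t^{e_j})}{(1-t)^r}.$$
Hence $p_{S/I(Y_A)}(t)=\prod_{j}(1-t^{e_j})$ and $1+a_Y=\sum_{j=1}^{r-1}e_j-(r-1)$, so it suffices to show that $\sum_{j=1}^{r-1}e_j=g\cdot g(\N D')+d_1+\cdots+d_r$.

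Next, I identify $I(Y_A)$ with the lattice ideal $I_L$ for
$$L=\{\m\in\Z^r:m_1+\cdots+m_r=0\ \text{and}\ d_i\mid m_i\ \text{for every}\ i\},$$
by checking that a homogeneous binomial $\x^{\alpha}-\x^{\beta}$ vanishes on $Y_A$ iff $\alpha-\beta\in L$ and that $|Y_A|=d_1\cdots d_r/g$ matches the multiplicity of $I_L$ (obtained from Bezout as $\prod e_j$). I then compare $L$ with $L'=\{\m'\in\Z^r:d_1 m'_1+\cdots+d_r m'_r=0\}$, the lattice underlying the toric ideal of the affine monomial curve $\Spec\K[u^{d_1},\dots,u^{d_r}]$ (equivalently, the toric ideal of the numerical semigroup $\N D'$). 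The diagonal scaling $\psi\colon L'\to L$, $\psi(\m')_i=d_i m'_i$, is a lattice isomorphism that preserves the sign of every coordinate, so it sends a mixed-dominating basis of $L'$ to one of $L$ and the complete-intersection property transfers. Moreover, the standard projective degree of the binomial corresponding to $\psi(\m')$ equals $\sum_{m'_i>0}d_i m'_i=g\sum_{m'_i>0}d'_i m'_i$, which is exactly $g$ times the $\N D'$-degree of the binomial corresponding to $\m'$. In particular, if $c_1,\dots,c_{r-1}$ are the $\N D'$-degrees of a minimal complete-intersection generating set of the toric ideal of $\N D'$, then $e_j=g\,c_j$.

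Finally, I invoke the classical Delorme formula for complete-intersection numerical semigroups with $\gcd=1$, which reads
$$g(\N D')=\sum_{j=1}^{r-1}c_j-\sum_{i=1}^{r}d'_i.$$
Multiplying by $g$ and using $g d'_i=d_i$ gives $g\cdot g(\N D')=\sum_j e_j-\sum_i d_i$, and substitution into the formula from the first step yields $1+a_Y=g\cdot g(\N D')+d_1+\cdots+d_r-(r-1)$. The main obstacle is the combinatorial transfer of the complete-intersection structure under $\psi$: one must verify that a minimal homogeneous generating set of $I(Y_A)$ in the standard projective grading corresponds via $\psi$ to a minimal generating set of the toric ideal of $\N D'$ in the semigroup grading. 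This reduces, through the Fischer--Morris--Shapiro characterisation of complete-intersection lattice ideals via mixed dominating bases, to the observation that the projective homogeneity condition $m_1+\cdots+m_r=0$ on $L$ is automatically inherited from the defining condition of $L'$ under $\psi$, so no extra homogeneity constraint needs to be imposed.
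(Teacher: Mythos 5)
Your argument is correct, but note first that the paper does not actually prove this proposition: it is quoted from \cite{HLRVLZ2013} (Corollary 3.13(ii)) as an external black box, so there is no internal proof to compare against. What you have written is a legitimate self-contained derivation, and it runs along exactly the lines the paper itself uses for Proposition \ref{P:HilbertF}: the complete-intersection hypothesis gives the Koszul resolution, hence $p_{S/I(Y_A)}(t)=\prod_j(1-t^{e_j})$ and $1+a_Y=\sum_j e_j-(r-1)$ via Proposition \ref{p:a-invariant} with all weights equal to $1$; the rescaling $x_i\mapsto x_i^{d_i}$ of Proposition \ref{p:IdealDegTori} (your map $\psi$) transports a complete-intersection generating set of the toric ideal of $\N D'$ to one of $I(Y_A)$ and multiplies the (well-defined, since graded Betti numbers are invariants) generator degrees by $g$, so $e_j=g\,c_j$; and the identity $g(\N D')=\sum_j c_j-\sum_i d'_i$ is precisely the statement that the Frobenius number of a numerical semigroup equals the degree of the rational function $HS_{\K[\N D']}(t)=\prod_j(1-t^{c_j})/\prod_i(1-t^{d'_i})$ --- you invoke this as Delorme's classical formula, but it is cleaner (and avoids a further external citation) to derive it from the Hilbert series exactly as the paper derives $a_Y$. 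Two cosmetic remarks: the identification $L=D(L_{\bb D})=\{\m \: : \: m_1+\cdots+m_r=0 \text{ and } d_i\mid m_i \text{ for all } i\}$ is just Theorem \ref{t:IdealDegTori} specialized to $\bb=[1\cdots 1]$, so it need not be re-verified; and the Bezout consistency check $|Y_A|=\prod_j e_j=d_1\cdots d_r/g$ is harmless but plays no role in the argument.
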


Notice that when $a_i=1$ and $w_j=1$, for all $i$ and $j$, we have $d_i=q-1$, and so $d_i'=1$. The greatest integer not belonging to the numerical semigroup $\N \bb=\N D'=\N$ is $g(\N \bb)=g(\N D')=-1$ so both formulas in Proposition \ref{p:aInvariantWPTori} and Proposition \ref{p:aInvariantPTori} yield $a_Y=n(q-2)-1$, for the torus $Y=T_X(\F_q)$ in the projective space $X=\Pp^n$.

\begin{definition} A binomial is  a polynomial of the form $\x^{\a}-\x^{\b} $, and $J$ is called a \textit{binomial ideal} if it is generated by binomials. $J$ is called a complete intersection if it is generated by $\h(J)$ many  binomials.
\end{definition} 

\begin{definition} For a lattice $L\subset \Z^r$,  the \textit{lattice ideal} $I_L$ is the binomial ideal generated by binomials $\x^{\a}- \x^{\b} $ for all $\a-\b \in L $.  That is, $$I_L=\langle \x^{\a}- \x^{\b}  \: | \: \a-\b \in L \rangle \subset S.$$ 

\end{definition}

\begin{tm}\cite[Theorem 4.5]{sahin18} \label{t:IdealDegTori} If $Y=Y_A$ then $I(Y)=I_L$ for  $L=D(L_{\beta D})$. 

\end{tm}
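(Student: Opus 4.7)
The plan is to prove the two containments $I_L \subseteq I(Y)$ and $I(Y) \subseteq I_L$ separately, with the first being a direct evaluation and the second requiring the structural fact that $I(Y)$ is itself a lattice ideal.

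For $I_L \subseteq I(Y)$ I would verify that each generating binomial $\x^{\a}-\x^{\b}$ of $I_L$, where $\m:=\a-\b \in L = D(L_{\bb D})$, vanishes on $Y_A$. Writing $\m = D\vv$ with $\vv\in L_{\bb D}$, the relation $\bb D \vv = 0$ yields $\bb\m = 0$, so $\x^{\a}-\x^{\b}$ is $\bb$-homogeneous. At a point $[t_1^{a_1}:\ldots:t_r^{a_r}] \in Y_A$ the quotient of the two monomials evaluates to $\prod_i t_i^{a_i d_i v_i}$. Since $d_i = |\eta^{a_i}|$ is by definition the order of $\eta^{a_i}$ in $\F_q^*$, one has $(q-1)\mid a_i d_i$, hence $t_i^{a_i d_i}=1$ for every $t_i\in\F_q^*$, and the binomial vanishes identically on $Y_A$.

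For the reverse inclusion, I would first reduce to an affine computation. Because $\bb$-homogeneous polynomials are $G$-semi-invariant under the action defining the toric quotient, a $\bb$-homogeneous $F \in S$ vanishes on $Y_A$ if and only if $F$ vanishes on the affine lift $\widetilde Y_A := \{(t_1^{a_1},\ldots,t_r^{a_r}) : t_i\in\F_q^*\} \subset (\F_q^*)^r$. For a $\bb$-homogeneous binomial $\x^{\a}-\x^{\b}$ with $\m=\a-\b\in L_{\bb}$, vanishing on $\widetilde Y_A$ demands $\prod_i t_i^{a_i m_i}=1$ for every choice of $t_i\in\F_q^*$; by setting $t_i=\eta$ and $t_j=1$ for $j\neq i$ this forces $(q-1)\mid a_i m_i$, equivalently $d_i \mid m_i$, for each $i$. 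Writing $\m = D\vv$ with $\vv\in\Z^r$, the homogeneity condition $\bb\m=0$ becomes $\bb D\vv=0$, i.e.\ $\vv\in L_{\bb D}$, so $\m\in D(L_{\bb D}) = L$. Thus the $\bb$-homogeneous binomials lying in $I(Y_A)$ are precisely those defining $I_L$.

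The main obstacle is upgrading this binomial analysis to arbitrary $\bb$-homogeneous elements of $I(Y_A)$, i.e.\ showing that $I(Y_A)$ is itself a lattice ideal. I would handle this by recognising that $Y_A$ is a subgroup of the quasi-torus $T_X(\F_q)$, being the image of $(\F_q^*)^r$ under the monomial map $(t_1,\ldots,t_r)\mapsto (t_1^{a_1},\ldots,t_r^{a_r})$ composed with the $G$-quotient; by the standard character-theoretic correspondence between subgroups of tori and sublattices of the character lattice, the vanishing ideal of such a subgroup is automatically a lattice ideal, and the previous paragraph identifies exactly which one. As a more hands-on alternative, since $I_L\subseteq I(Y_A)$ one has $H_{S/I(Y_A)}(\aa)\le H_{S/I_L}(\aa)$ in every degree, so equality of ideals follows from equality of Hilbert functions; the latter can in turn be checked by showing $H_{S/I_L}$ stabilizes to $|Y_A|$ using the monomial basis of $S/I_L$ indexed by cosets of $L$ in $\N^r$. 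Either route delivers $I(Y_A)=I_L$.
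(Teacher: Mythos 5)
This theorem carries no proof in the paper at all: it is imported verbatim from \cite[Theorem 4.5]{sahin18}, so your attempt can only be measured against the standard argument rather than against anything in the text. Your two computational steps are correct. The containment $I_L\subseteq I(Y_A)$ follows exactly as you say from $(q-1)\mid a_id_i$, and your characterization of the $\bb$-homogeneous binomials vanishing on the affine lift --- namely $d_i\mid m_i$ for all $i$ together with $\bb\m=0$, which is precisely $\m\in D(L_{\bb D})$ --- identifies the lattice correctly; the reduction to the affine lift via semi-invariance of homogeneous polynomials under $G$ is also sound. You are right that the entire weight of the theorem sits on the step you flag as the main obstacle: showing that $I(Y_A)$ is generated by its homogeneous binomials.

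Of your two routes around that obstacle, Route A (the correspondence between subgroups of $T_X(\F_q)$ and $\bb$-graded lattice ideals) is legitimate but comes very close to assuming the cited result: that correspondence, invoked in the paper from \cite{sahinFFA22}, is essentially the content of \cite[Theorem 4.5]{sahin18} itself, so as written Route A is a proof only relative to that black box. Route B, as stated, has a genuine gap: knowing that $H_{S/I_L}$ stabilizes to $|Y_A|$ and that $I_L\subseteq I(Y_A)$ only gives $I_{L,\aa}=I_{\aa}(Y_A)$ in sufficiently large degrees, not in every degree, and equality of ideals does not follow from agreement of Hilbert functions in high degrees alone. The gap is fixable: since $L\subseteq\ker\bb$ and the fan is complete, one has $L\cap\N^r=\{0\}$, so every variable $x_i$ is a nonzerodivisor on $S/I_L$; then for a homogeneous $F\in I(Y_A)$ one takes $k\gg 0$ with $x_i^kF$ of degree in the stable range, concludes $x_i^kF\in I_L$, and cancels $x_i^k$ to get $F\in I_L$. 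With that descent step added, Route B becomes a genuinely self-contained proof, and is the preferable of your two options precisely because it does not lean on the subgroup--lattice-ideal correspondence you are trying to instantiate.
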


If $a_i=1$, for all $i$, then $Y_A=T_X(\F_q)$ and $d_i=q-1$, for all $i$, so that the matrix $D$ is  just $q-1$ times the identity matrix yielding the following:

\begin{corollary} \cite[Corollary 4.14 (ii)]{sahin18} If $Y=T_X(\F_q)$ then $I(Y)=I_{L}$ for $L=(q-1)L_{\beta}$.

\end{corollary}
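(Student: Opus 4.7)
My plan is to derive this corollary by specializing Theorem \ref{t:IdealDegTori} to the case $a_i = 1$ for every $i$. First, I would note that under this specialization the definition of the degenerate torus collapses to $Y_A = T_X(\F_q)$, so the hypothesis of Theorem \ref{t:IdealDegTori} is met and directly applies to $Y = T_X(\F_q)$.

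Next, I would compute the diagonal matrix $D$. Since $\eta$ generates the cyclic group $\F_q^*$ of order $q-1$, we get $d_i = |\eta^{a_i}| = |\eta| = q-1$ for every $i$, so $D = (q-1)\cdot I$, where $I$ is the $r \times r$ identity matrix. Theorem \ref{t:IdealDegTori} then yields $I(Y) = I_L$ with
\[ L = D(L_{\beta D}) = (q-1)\, L_{\beta D}. \]

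The remaining step, which I expect to be the only substantive point to verify, is the identification $L_{\beta D} = L_\beta$. Recalling from the short exact sequence $\mathfrak{P}$ in Section \ref{sec:prelim} that $L_\beta = \ker(\beta) \subset \Z^r$, and interpreting $L_{\beta D}$ analogously as $\ker(\beta D)$, the equality follows from $\beta D = (q-1)\beta$: a vector $v \in \Z^r$ satisfies $\beta D v = 0$ if and only if $(q-1)\beta v = 0$, which in the torsion-free lattice $\Z^d$ is equivalent to $\beta v = 0$. Hence $L_{\beta D} = L_\beta$, and therefore $L = (q-1)L_\beta$ as claimed.

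Overall the argument is essentially bookkeeping on Theorem \ref{t:IdealDegTori}; the only subtle ingredient is torsion-freeness of the target $\Z^d$ in $\mathfrak{P}$, without which multiplication by $q-1$ could enlarge the kernel and make $\ker(\beta D)$ strictly larger than $\ker(\beta)$.
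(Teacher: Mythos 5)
Your argument is correct and is essentially the paper's own: the corollary is presented there as an immediate specialization of Theorem~\ref{t:IdealDegTori} to $a_i=1$, where $Y_A=T_X(\F_q)$ and $D=(q-1)I$, so $L=D(L_{\beta D})=(q-1)L_{\beta D}$. The identification $L_{\beta D}=L_{\beta}$, which you justify via torsion-freeness of $\Z^d$ in the sequence $\mathfrak{P}$, is the one step the paper leaves implicit, and you handle it correctly.
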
 

\begin{proposition}\cite[Proposition 4.12]{sahin18} \label{p:IdealDegTori} A generating system of binomials for $I(Y_{A})$ is obtained from that of $I_{L_{\beta D}}$ by replacing $x_i$ with $x_i^{d_i}$. $I(Y_{A})$ is a complete intersection if and only if so is the toric ideal $I_{L_{\beta D}}$. In this case, a minimal generating system is obtained from a minimal generating system of $I_{L_{\beta D}}$ this way.
\end{proposition}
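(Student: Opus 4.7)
The plan is to leverage Theorem~\ref{t:IdealDegTori}, which identifies $I(Y_A)=I_L$ with $L=D(L_{\beta D})$, by introducing the ring endomorphism $\varphi\colon S\to S$ defined by $\varphi(x_i)=x_i^{d_i}$. Since $D=\operatorname{diag}(d_1,\dots,d_r)$ has positive entries, it is componentwise sign-preserving, so $(D\m)^{+}=D\m^{+}$ and $(D\m)^{-}=D\m^{-}$ for every $\m\in\Z^r$. Consequently $\varphi$ sends the binomial of $I_{L_{\beta D}}$ attached to $\m\in L_{\beta D}$ to the binomial of $I_L$ attached to $D\m\in L$:
\[
\varphi(\x^{\m^{+}}-\x^{\m^{-}}) \;=\; \x^{(D\m)^{+}}-\x^{(D\m)^{-}}.
\]

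For the first assertion I would verify $I_L=\varphi(I_{L_{\beta D}})\cdot S$ directly by a double inclusion. The forward inclusion is immediate from the display above. For the reverse inclusion, every binomial generator $\x^{\mathbf{n}^{+}}-\x^{\mathbf{n}^{-}}$ of $I_L$ has $\mathbf{n}\in L=D(L_{\beta D})$, so $\mathbf{n}=D\m$ for some $\m\in L_{\beta D}$, and the binomial is the $\varphi$-image of the one attached to $\m$. Thus any binomial generating set of $I_{L_{\beta D}}$ substitutes under $\varphi$ to a generating set of $I(Y_A)$, proving the first claim.

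The complete intersection equivalence reduces to the matching of heights. Using the identity $\h(I_L)=\rank(L)$ for any lattice ideal together with the fact that $D$ is $\Q$-invertible (so $\rank(D(L_{\beta D}))=\rank(L_{\beta D})$), one obtains $\h(I(Y_A))=\h(I_{L_{\beta D}})=r-d$. The forward direction is then immediate: a minimal set of $r-d$ binomial generators of $I_{L_{\beta D}}$ substitutes to $r-d$ generators of $I(Y_A)$, and since this count matches $\h(I(Y_A))$, it is a minimal generating set and $I(Y_A)$ is a complete intersection. For the converse, pick a minimal set of $r-d$ binomial generators of $I_L$ (which exists since lattice ideals admit binomial minimal generators), write each as $\varphi(b_j)$ for a binomial $b_j\in I_{L_{\beta D}}$, and argue that the $b_j$ themselves minimally generate $I_{L_{\beta D}}$, pushing the generating property back along $\varphi$ via the injectivity of $D$ on $\Z^r$ to transport the underlying lattice generators.

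I expect the converse direction of the complete intersection equivalence to be the main obstacle, specifically showing that pulling a minimal binomial generating set back along $\varphi$ yields a minimal generating set of $I_{L_{\beta D}}$. The technical content is the correspondence between binomial generating sets of a lattice ideal and generating sets of the underlying lattice; for the toric (saturated-lattice) ideal $I_{L_{\beta D}}$ this is a standard fact, and the injectivity of $D$ then carries the correspondence faithfully across $\varphi$, completing the minimality claim in the final sentence of the proposition.
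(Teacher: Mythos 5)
This proposition is imported verbatim from \cite[Proposition 4.12]{sahin18}; the paper under review gives no proof of it, so your attempt can only be judged on its own terms. The first assertion and the forward half of the complete intersection equivalence are sound: since $D$ is diagonal with strictly positive entries, $(D\m)^{+}=D(\m^{+})$ and $(D\m)^{-}=D(\m^{-})$, so the substitution $\varphi(x_i)=x_i^{d_i}$ carries the defining binomials of $I_{L_{\beta D}}$ exactly onto those of $I_L=I(Y_A)$ with $L=D(L_{\beta D})$, and your double inclusion plus the height count $\h(I_L)=\rank L=\rank L_{\beta D}=\h(I_{L_{\beta D}})$ transports a complete intersection presentation from $I_{L_{\beta D}}$ to $I(Y_A)$.

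The gap is in the converse, exactly where you flagged it, and the ``standard fact'' you lean on there is false. Your chain is: $\{\varphi(b_j)\}$ generates $I_L$ $\Rightarrow$ $\{D\m_j\}$ generates $L$ $\Rightarrow$ $\{\m_j\}$ generates $L_{\beta D}$ $\Rightarrow$ $\{b_j\}$ generates $I_{L_{\beta D}}$. The first two implications hold, but the last does not: for a lattice ideal, generation of the lattice by the exponent vectors is necessary but not sufficient for generation of the ideal by the corresponding binomials. The toric ideal of the twisted cubic is the standard counterexample: its lattice has rank $2$ and is therefore generated by $2$ vectors, yet the ideal requires $3$ binomial generators, and the ideal generated by the two binomials of a lattice basis is a proper (non-saturated) subideal. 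So injectivity of $D$ on lattices cannot, by itself, push generation back along $\varphi$. The repair is already in the paper: by Theorem \ref{t:mixeddominating}, a lattice ideal $I_{L'}$ with $L'\cap\N^r=\{0\}$ is a complete intersection if and only if $L'$ admits a basis whose matrix is mixed dominating, and in that case the binomials of that basis generate. Now $D$ restricts to an isomorphism $L_{\beta D}\to L$ matching bases, and left multiplication by a positive diagonal matrix preserves the sign of every entry, hence preserves mixedness of columns and the absence of square mixed submatrices; thus $[\m_1\cdots\m_k]$ is mixed dominating if and only if $D[\m_1\cdots\m_k]$ is. This yields both directions of the equivalence simultaneously, and since the generating binomials supplied by the theorem on the two sides are exchanged by $\varphi$, it also delivers the final claim about minimal generating systems. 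The pointedness hypotheses match because $D\m\in\N^r$ if and only if $\m\in\N^r$.
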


\section{Degenerate Tori on Weighted Projective Spaces}

In this section, we explore properties of some degenerate tori on a weighted projective space. To start with, we prove that they are complete intersections of special type of binomial hypersurfaces.

We focus on a weighted projective space $X=\Pp(w_0,\dots,w_n)$ and use the notation $S=\F_q[x_0,\dots,x_n]$ for the Cox ring of $X$. Set

$$\Tilde{w}_i := \frac{w_i}{\gcd(q-1,w_i)} \mbox{ and } d_i := \frac{q-1}{\gcd(q-1,w_i)} \mbox{ for } i=0,1,\ldots,n. $$ 

The following concept is very helpful in determining when a lattice ideal is a complete intersection.

\begin{definition} If each column of a matrix has both a positive and a negative entry we say that  the matrix is mixed. Moreover, if the matrix does
not have a square mixed submatrix, then it is called {\it dominating}. 
\end{definition} 

\begin{tm}\label{t:mixeddominating}\cite[Theorem 3.9]{MT2005} Let $L\subseteq\Z^r$ be a lattice with the property that $L\cap \N^r={0}$. Then, $I_L$ is  a  complete intersection  if and only if $ L$ has a basis ${\m_1,\dots,\m_k}$ such that the matrix $[\m_1\cdots \m_k]$ is mixed dominating.  If $I_L$ is a  complete intersection, then we have 
$$ I_L=\la \textbf{x}^{\textbf{m}_1^+}-\textbf{x}^{\textbf{m}_1^-},\dots, \textbf{x}^{\textbf{m}_k^+}-\textbf{x}^{\textbf{m}_k^-} \ra. $$
\end{tm}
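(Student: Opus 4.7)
The strategy relies on two standard facts about lattice ideals. The height of $I_L$ equals the rank $k$ of the lattice $L$, so any complete intersection presentation must consist of exactly $k$ binomials, and for any basis $\{\m_1,\dots,\m_k\}$ of $L$ the ideal $J=\la \x^{\m_i^+}-\x^{\m_i^-}:i=1,\dots,k\ra$ is contained in $I_L$. Thus the theorem reduces to characterizing, in terms of the basis matrix, when the inclusion $J\subseteq I_L$ is an equality.

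First, I would observe that the hypothesis $L\cap\N^r=\{0\}$ already forces any basis matrix to be mixed: if some $\m_i$ had all entries of a single sign, then $\pm\m_i\in L\cap\N^r=\{0\}$, contradicting that $\m_i$ is a basis vector. So the mixed condition is automatic and the real content is the dominating condition.

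For the easy direction, assume $I_L$ is a complete intersection minimally generated by $k$ binomials $\x^{\alpha_i}-\x^{\beta_i}$. Setting $\m_i=\alpha_i-\beta_i\in L$, the standard correspondence between binomial generators and lattice relations shows that $\{\m_1,\dots,\m_k\}$ is a basis of $L$. If the matrix $[\m_1\cdots\m_k]$ admitted a square mixed submatrix, then combining the involved columns would produce a binomial relation supported only on the variables indexed by the chosen rows, forcing a variable into an associated prime of $S/J$; this contradicts the fact that $I_L=J$ contains no monomial (again by $L\cap\N^r=\{0\}$).

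For the harder converse, assume $\{\m_1,\dots,\m_k\}$ is a basis whose matrix $M$ is mixed and dominating. I would proceed by induction on $k$. The dominating condition provides a combinatorial handle: in any $k\times k$ square submatrix, some column is not mixed, and this obstructs the construction of spurious cancellations. One uses this structural restriction to find an elimination step, separating some variable $x_j$ from the remaining generators via a change of basis, so as to reduce to a lattice of rank $k-1$ in one fewer variable whose basis matrix inherits the mixed dominating property. Invoking the inductive hypothesis and then lifting the resulting equality of ideals back through the elimination yields $J=I_L$. The main obstacle will be making the elimination step precise: producing the exact structural lemma about dominating matrices needed for the reduction, and verifying that mixed dominating is preserved, together form the technical heart of the argument in \cite{MT2005}. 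Once this step is in place, the explicit formula
$$I_L=\la \textbf{x}^{\textbf{m}_1^+}-\textbf{x}^{\textbf{m}_1^-},\dots,\textbf{x}^{\textbf{m}_k^+}-\textbf{x}^{\textbf{m}_k^-}\ra$$
follows directly from $J=I_L$.
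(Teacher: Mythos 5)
The paper does not prove this statement: it is quoted verbatim as Theorem 3.9 of \cite{MT2005} and used purely as an external input (in the proof of Proposition \ref{prop:ideal_Y_Q}), so there is no in-paper argument to measure your attempt against. Judged on its own terms, your write-up is an outline rather than a proof, and the gap is the one you yourself flag. The preliminary reductions are fine: $\h(I_L)=\rank L=k$, the containment $J=\la \x^{\m_1^+}-\x^{\m_1^-},\dots,\x^{\m_k^+}-\x^{\m_k^-}\ra\subseteq I_L$ for any basis, and the observation that $L\cap\N^r=\{0\}$ forces every basis matrix to be mixed are all correct and standard. But both substantive implications are left unestablished. In the necessity direction, the claim that a square mixed submatrix ``would produce a binomial relation supported only on the variables indexed by the chosen rows, forcing a variable into an associated prime of $S/J$'' is precisely the assertion that needs proof; nothing in your text actually produces that relation or that associated prime. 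In the sufficiency direction you describe an induction on $k$ via an elimination step, and then state that making this step precise --- the structural lemma on dominating matrices and the verification that the mixed dominating property survives the reduction --- ``forms the technical heart of the argument in \cite{MT2005}.'' Deferring the technical heart of the theorem to the very reference one is supposed to be reproving leaves the proof with no content beyond the trivial reductions. Note also that dominating forbids mixed square submatrices of \emph{every} size, not only $k\times k$ ones, so even the combinatorial handle you invoke is stated too narrowly. In short: nothing you wrote is incorrect, but the theorem is not proved; for this paper's purposes that is acceptable only because the result is imported by citation rather than established anew.
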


\begin{proposition}\label{prop:ideal_Y_Q} Let $Q=diag(w_0,\ldots,w_n)$ and $Y_Q=\{[{t_0}^{w_{0}}:\ldots:{t_n}^{w_{n}}]|t_i\in \F_q^{*}\}$ be the corresponding subgroup of $T_X$ for $X=\Pp(w_0,\ldots,w_n)$.
 If $w_0 \mid q-1$ and $F_i = x_i^{d_i} - x_0^{d_0\Tilde{w}_i}$, $i=1,2,\ldots,n$,  then, the vanishing ideal of $Y_Q$ is the following complete intersection lattice ideal:
$$I(Y_Q) = \langle F_1, F_2, \ldots, F_n \rangle.$$
\end{proposition}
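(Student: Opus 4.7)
The plan is to pull the statement back to a much simpler toric ideal via Proposition~\ref{p:IdealDegTori}. Concretely, I would first use the hypothesis $w_0 \mid q-1$ to observe that $\gcd(q-1,w_0) = w_0$, so that $\tilde{w}_0 = 1$ and $d_0 = (q-1)/w_0$; this is the single place the hypothesis is used and it is what makes the binomials $F_i$ clean (the exponent of $x_0$ in $F_i$ is $d_0\tilde{w}_i$, with $\tilde{w}_0 = 1$ built in). Since $Y_Q$ is precisely the degenerate torus $Y_A$ with $A = Q$, Proposition~\ref{p:IdealDegTori} reduces the claim to showing that $I_{L_{\beta D}}$ is the complete intersection $\langle x_1 - x_0^{\tilde{w}_1},\dots,x_n-x_0^{\tilde{w}_n}\rangle$; then substituting $x_i \mapsto x_i^{d_i}$ in each generator produces $F_i = x_i^{d_i}-x_0^{d_0\tilde{w}_i}$ and transfers the complete intersection property to $I(Y_Q)$.

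Next I would identify $L_{\beta D}$ explicitly. Since $\beta = [w_0\cdots w_n]$ and $D = \operatorname{diag}(d_0,\dots,d_n)$, one has $\beta D = (q-1)[\tilde{w}_0\ \tilde{w}_1\cdots\tilde{w}_n]$, so
\[
L_{\beta D} \;=\; \Bigl\{\,a\in\Z^{n+1}\;:\;\sum_{i=0}^{n}\tilde{w}_i a_i = 0\,\Bigr\}.
\]
Using $\tilde{w}_0 = 1$, the $n$ vectors $\m_i = e_i - \tilde{w}_i e_0$ for $i=1,\dots,n$ are clearly independent and lie in $L_{\beta D}$; a straightforward check shows that any $a \in L_{\beta D}$ satisfies $a = \sum_{i=1}^{n} a_i \m_i$ (because the relation forces $a_0 = -\sum_{i\geq 1}\tilde{w}_i a_i$), so $\{\m_1,\dots,\m_n\}$ is a $\Z$-basis of $L_{\beta D}$.

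The main (minor) verification is then that the matrix $M = [\m_1\ \m_2\cdots\m_n]$ is mixed dominating in the sense of Theorem~\ref{t:mixeddominating}. Each column has the positive entry $1$ in row $i$ and the negative entry $-\tilde{w}_i$ in row $0$, so $M$ is mixed. For dominance, take any $k\times k$ submatrix; if row $0$ is omitted the submatrix is non-negative, hence not mixed; if row $0$ is kept, the remaining $k-1$ chosen rows (from rows $1,\dots,n$) contain at most $k-1$ nonzero entries total, so at least one of the $k$ chosen columns has no positive entry and fails the mixed condition. Thus $M$ is dominating. Moreover $L_{\beta D}\cap\N^{n+1}=\{0\}$ since all $\tilde{w}_i$ are positive, so Theorem~\ref{t:mixeddominating} applies and gives $I_{L_{\beta D}} = \langle x_i - x_0^{\tilde{w}_i} : 1\le i\le n\rangle$ as a complete intersection.

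Finally I would invoke Proposition~\ref{p:IdealDegTori}: substituting $x_i\mapsto x_i^{d_i}$ in each generator yields $x_i^{d_i} - x_0^{d_0\tilde{w}_i} = F_i$, and the proposition states both that these generate $I(Y_Q)$ and that the complete intersection property passes from $I_{L_{\beta D}}$ to $I(Y_Q)$, completing the proof. The only real subtlety is the role of $w_0\mid q-1$: without it, $\tilde{w}_0 \neq 1$ and the lattice $L_{\beta D}$ no longer admits the triangular basis $\{e_i - \tilde{w}_i e_0\}$, so the clean binomial description of $I_{L_{\beta D}}$ (and hence of $I(Y_Q)$) would break down. I do not anticipate any hidden obstacle beyond bookkeeping the exponents.
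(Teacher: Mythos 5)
Your proposal is correct and follows essentially the same route as the paper's proof: reduce via Theorem~\ref{t:IdealDegTori} and Proposition~\ref{p:IdealDegTori} to the toric ideal $I_{L_{\beta D}}$, identify $L_{\beta D}$ with the kernel of $[\Tilde{w}_0\cdots\Tilde{w}_n]$ using $w_id_i=(q-1)\Tilde{w}_i$, exhibit the triangular basis $\{e_i-\Tilde{w}_ie_0\}$ made possible by $\Tilde{w}_0=1$, and apply Theorem~\ref{t:mixeddominating}. The only difference is that you spell out the mixed-dominating verification and the condition $L_{\beta D}\cap\N^{n+1}=\{0\}$, both of which the paper asserts without detail; your checks are correct.
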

\begin{proof} Since $D=diag (d_0,\ldots,d_n)$ and $\bb=[ w_0 \cdots w_n]$, it follows that their product is $\bb D=[ w_0d_0 \cdots w_nd_n]$. It is clear that $\Tilde{w}_i(q-1)=w_id_i$, and so 
$$\gcd(w_0d_0,\dots,w_nd_n)=(q-1)\gcd(\Tilde{w}_0,\dots,\Tilde{w}_n).$$ Therefore, we have the equality of the lattices $L_{\bb D}=L_{\Tilde{W}}$, where $\Tilde{W}$ is the matrix with columns $\Tilde{w}_i$, for $i=0,\dots,n$.

When $w_0 \mid q-1$, we have $\Tilde{w}_0 =1$ and thus the lattice $L_{\Tilde{W}}$ has the following basis $$\{(-\Tilde{w}_1,\e_1),\dots,(-\Tilde{w}_n,\e_n)\},$$ where $\e_i$ form the standard basis for $\Z^n$.
 Consider the matrix $M$ whose columns are the basis vectors of $L_{\Tilde{W}}$ given above. Since the matrix $M$ is mixed-dominating, it follows from Theorem \ref{t:mixeddominating} that the lattice ideal of $L_{\Tilde{W}}$ is a complete intersection generated by the binomials $x_i - x_0^{\Tilde{w}_i}$, $i=1,2,\ldots,n$.

By Theorem \ref{t:IdealDegTori}, the vanishing ideal $I(Y_Q)$ is the binomial ideal $I_L$ for the lattice $L=D(L_{\beta D})$, whose generators are obtained substituting $x_i^{d_i}$ for $x_i$ in the binomials above generating the lattice ideal of $L_{\Tilde{W}}$, by Proposition \ref{p:IdealDegTori}. Therefore, the vanishing ideal $I(Y_Q)$ is a complete intersection generated by the binomials $F_1, F_2, \ldots, F_n.$
\end{proof}

\begin{proposition}\label{P:HilbertF} Let $Q=diag(w_0,\ldots,w_n)$ and $Y_Q=\{[{t_0}^{w_{0}}:\ldots:{t_n}^{w_{n}}]|t_i\in \F_q^{*}\}$ be the corresponding subgroup of $T_X$ for $X=\Pp(w_0,\ldots,w_n)$.
 If $w_0 \mid q-1$ then, for any $\aa\in \N \bb$ we have
$$
H_{Y_Q}(\aa)=\sum_{s=0}^{n}\,\, (-1)^s\!\!\! \sum_{I\subseteq [n], |I|=s} \dim_{\K}S_{\aa-\aa_I},
$$
where $\aa_I=\sum_{i\in I}\aa_i$. Moreover, the $a$-invariant of $Y_Q$ is given by the formula $ a_{Y_Q}=(d_1-1)w_1+\cdots+(d_n-1)w_n-w_0$.
\end{proposition}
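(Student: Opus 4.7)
The plan is to leverage the complete intersection structure established in Proposition \ref{prop:ideal_Y_Q}. The key observation is that the generators $F_1,\ldots,F_n$ form a regular sequence in the Cohen--Macaulay ring $S$: indeed, $I(Y_Q)$ has height $n$ since $Y_Q$ is zero-dimensional inside the $n$-dimensional weighted projective space $X$, and $n$ generators cutting out an ideal of height $n$ in a Cohen--Macaulay ring automatically form a regular sequence. Writing $\aa_i := \deg(F_i) = d_i w_i$, the Koszul complex on $(F_1,\ldots,F_n)$ then yields a graded free resolution of $S/I(Y_Q)$, which gives the Hilbert series
\begin{equation*}
HS_{S/I(Y_Q)}(t) \;=\; \frac{\prod_{i=1}^{n}(1-t^{\aa_i})}{\prod_{i=0}^{n}(1-t^{w_i})}.
\end{equation*}

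From this, both claims would follow by routine bookkeeping. For the Hilbert function, I would expand the numerator as $\prod_{i=1}^{n}(1-t^{\aa_i}) = \sum_{I\subseteq [n]}(-1)^{|I|}t^{\aa_I}$, multiply against $HS_S(t)=\sum_{\aa\in\N\bb}\dim_{\K}S_{\aa}\,t^{\aa}$, and compare coefficients of $t^{\aa}$ to obtain exactly the claimed inclusion--exclusion formula for $H_{Y_Q}(\aa)$ (with the convention $\dim_{\K}S_{\aa-\aa_I}=0$ whenever $\aa-\aa_I\notin\N\bb$).

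For the $a$-invariant, uniqueness of the numerator in \eqref{e:HilbSeries} identifies $p_{S/I(Y_Q)}(t) = \prod_{i=1}^{n}(1-t^{\aa_i})$, whose degree is $\sum_{i=1}^{n} d_i w_i$. Plugging this into the formula $a_{Y_Q} = \deg p_{S/I(Y_Q)}(t) - (w_0+\cdots+w_n)$ from Proposition \ref{p:a-invariant} and simplifying yields
\begin{equation*}
a_{Y_Q} \;=\; \sum_{i=1}^{n} d_i w_i \;-\; \sum_{i=0}^{n} w_i \;=\; \sum_{i=1}^{n}(d_i-1)w_i - w_0.
\end{equation*}

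The main obstacle is the first step: certifying the regular sequence property so that the Koszul complex is indeed exact. Everything downstream depends on this, so it is worth stating explicitly that in the weighted polynomial ring $S$ the conclusion of Proposition \ref{prop:ideal_Y_Q} (a complete intersection generated by the $F_i$) together with Cohen--Macaulayness of $S$ forces regularity of the sequence. Once that is in hand, the rest of the argument is purely formal manipulation of the resulting Hilbert series.
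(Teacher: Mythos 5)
Your proposal is correct and follows essentially the same route as the paper: both use the complete intersection structure from Proposition \ref{prop:ideal_Y_Q} to invoke the Koszul complex as a graded free resolution, extract the inclusion--exclusion formula for $H_{Y_Q}(\aa)$ from its graded pieces (equivalently, from the Hilbert series numerator $\prod_{i=1}^{n}(1-t^{\aa_i})$), and read off the $a$-invariant from $\deg p_{S/I(Y_Q)}$ via Proposition \ref{p:a-invariant}. Your explicit justification of the regular-sequence property (height $n$ plus Cohen--Macaulayness of $S$) is a welcome clarification of a step the paper leaves implicit in its definition of complete intersection.
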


\begin{proof} 
 Notice that $I(Y_Q)$ is a complete intersection by Proposition \ref{prop:ideal_Y_Q} generated by binomials of degrees $\aa_1=d_1w_1,\dots,\aa_n=d_nw_n$.  Thus, its minimal free resolution is given by the Koszul complex. As in the proof of \cite[Proposition 3.13]{SaSop16JA} we have the following exact sequence 
$$\dis { 0  \rightarrow W_n \rightarrow \cdots \rightarrow  W_s  \rightarrow \cdots \rightarrow  W_1 \rightarrow S_{\aa}\rightarrow (S/I(Y_Q))_{\aa}\rightarrow 0 },$$
where, for every $s=1,\dots,n$, the vector space $W_s$ is given by 
$$W_s=\bigoplus_{I\subseteq [n], |I|=s} S(-\aa_{I})_{\aa}=\bigoplus_{I\subseteq [n], |I|=s} S_{\aa-\aa_{I}}.$$
Therefore, we obtain:
 \begin{eqnarray}\label{e:HF} 
 H_{Y_Q}(\aa)&=&\dim_{\K}S_{\aa}+\sum_{s=1}^{n} (-1)^s \dim_{\K} W_s\nonumber\\
&=& \sum_{s=0}^{n} (-1)^s \sum_{I\subseteq [n], |I|=s} \dim_{\K}S_{\aa-\aa_I},
 \end{eqnarray}
where $\aa_I=\sum_{i\in I}\aa_i$.
By Proposition 8.23 in \cite{CombComAlgBook}, the numerator of the Hilbert series in Equation \ref{e:HilbSeries} is as follows: 
$$p_{S/I(Y_Q)}=\sum_{s=0}^{n} (-1)^s \sum_{I\subseteq [n], |I|=s} t^{\aa_I}.$$  Hence, $p_{S/I(Y_Q)}$ has degree $\aa_1+\cdots+\aa_n=d_1w_1+\cdots+d_nw_n$, and thus $$ a_{Y_Q}=(d_1-1)w_1+\cdots+(d_n-1)w_n-w_0$$ by Proposition \ref{p:a-invariant}. 
\end{proof}

\begin{example} Let $X=\Pp(1,1,2)$. Consider the matrix $Q=diag(1,1,2)$ and $Y_Q=\{[t_0:t_1:t_2^{2}] \:| \: t_0,t_1,t_2\in \F_q^{*}\}$. Assume that $q$ is odd. So, we have \[(d_0,d_1,d_2)=(q-1,q-1,(q-1)/2) \text{ and } (\Tilde{w}_0, \Tilde{w}_1,\Tilde{w}_2) = (1,1,1).
\]
Thus,  $I(Y_Q) =\langle F_1, F_2 \rangle= \langle x_1^{q-1} - x_0^{q-1}, x_2^{(q-1)/2} - x_0^{q-1} \rangle$. As the degrees of the generators are $\aa_1=q-1$ and $\aa_2=q-1$, a graded minimal free resolution of $I(Y_Q)$ is given by:
$$\dis { 0  \rightarrow S_{\aa-\aa_1-\aa_2} \stackrel{[-F_2 \: F_1]^T}{\longrightarrow} S_{\aa-\aa_1} \oplus S_{\aa-\aa_2} \stackrel{[F_1 \: F_2]}{\longrightarrow} S_{\aa}\rightarrow (S/I(Y_Q))_{\aa}\rightarrow 0 }.$$
Therefore, the Hilbert function is computed to be
\begin{eqnarray}\label{ex:HF} 
 H_{Y_Q}(\aa)&=&\dim_{\K}S_{\aa}-\dim_{\K}S_{\aa-\aa_1}-\dim_{\K}S_{\aa-\aa_2}+\dim_{\K}S_{\aa-\aa_1-\aa_2}\nonumber \\
 &=& \dim_{\K}S_{\aa}-2\dim_{\K}S_{\aa-(q-1)}+\dim_{\K}S_{\aa-2(q-1)}\nonumber.
 \end{eqnarray}
 We first notice the following
 $$\dim_{\K}S_{\aa}=\left\{ 
\begin{array}{ll}
 (\aa_0+1)^2    & \mbox{if } \aa=2\aa_0\\
  (\aa_0+1)(\aa_0+2)    & \mbox{if } \aa=2\aa_0+1.
\end{array}
\right.
$$
 
Thus, if $0\leq \aa \leq q-2$, then $\dim_{\K}S_{\aa-(q-1)}=\dim_{\K}S_{\aa-2(q-1)}=0$. Hence,

$$H_{Y_Q}(\aa)=\left\{ 
\begin{array}{ll}
 (\aa_0+1)^2    & \mbox{if } \aa=2\aa_0\\
  (\aa_0+1)(\aa_0+2)    & \mbox{if } \aa=2\aa_0+1.
\end{array}
\right.
$$
When, $q-1\leq \aa<2(q-1)$, we have $\dim_{\K}S_{\aa-2(q-1)}=0$. It is easy to see that
$$\dim_{\K}S_{\aa-(q-1)}=\left\{ 
\begin{array}{ll}
 (\aa_0+1-(q-1)/2)^2    & \mbox{if } \aa=2\aa_0\\
  (\aa_0+1-(q-1)/2)(\aa_0+2-(q-1)/2)    & \mbox{if } \aa=2\aa_0+1.
\end{array}
\right.
$$
Hence, we have the following formula for $H_{Y_Q}(\aa):$
$$\left\{ 
\begin{array}{ll}
 (\aa_0+1)^2 -2(\aa_0+1-(q-1)/2)^2    & \mbox{if } \aa=2\aa_0\\
  (\aa_0+1)(\aa_0+2) -2(\aa_0+1-(q-1)/2)(\aa_0+2-(q-1)/2)    & \mbox{if } \aa=2\aa_0+1.
\end{array}
\right.
$$
Finally, when $\aa \geq 2(q-1)$, we get
$$\dim_{\K}S_{\aa-2(q-1)}=\left\{ 
\begin{array}{ll}
 (\aa_0+1-(q-1))^2    & \mbox{if } \aa=2\aa_0\\
  (\aa_0+1-(q-1))(\aa_0+2-(q-1))    & \mbox{if } \aa=2\aa_0+1.
\end{array}
\right.
$$
Therefore, we have
 $H_{Y_Q}(\aa)=(q-1)^2/2=|Y_Q|$ which is not surprising as we have $\aa > a_{Y_Q}$ in this case.

\end{example}

\section{Length and Dimension when $X=\Pp(1,w_1,\dots,w_n)$}

 Let $\mathbb{F}_q^* = \langle \eta \rangle$, then the order of $\eta_i := \eta^{w_i}$ is 
$$ d_i = \frac{q-1}{\gcd(q-1,w_i)} \quad i=1,\ldots,n. $$ 
By using $I(Y_Q)$,  the length and the dimension of
$\cC_{\alpha,Y_Q}$  are computed as follows.
\begin{tm} \label{t:dimension}
Let $X=\Pp(1,w_1,\dots,w_n)$ be a weighted projective space over the field $\overline{\F}_q$. Consider $Q = \mbox{diag}(1,w_1,\ldots,w_n)$ and the subgroup it defines in  $T_X(\F_q)$:
 $$Y_Q = \{[t_0:t_1^{w_1}:\ldots :t_n^{w_n}] \: | \: t_i \in \F_q^*, \text{ for all } i=0,\ldots,n\}.$$ Then, the length of
$\cC_{\alpha,Y_Q}$ is $|Y_Q| = d_1\cdots d_n$ and the dimension is
$$\displaystyle \dim(\cC_{\alpha,Y_Q})= \sum_{m_n=0}^{\min\{\lfloor \frac{\alpha}{w_n} \rfloor,d_n-1\}}\sum_{m_{n-1}=0}^{\min\{\lfloor \frac{\alpha-m_nw_n}{w_{n-1}}\rfloor,d_{n-1}-1\}}\cdots \sum_{m_1=0}^{\min\{\lfloor \frac{\alpha-m_n w_n-\cdots -m_2w_2}{w_1}\rfloor,d_1-1\}}1.$$
Moreover, the a-invariant is given by
$$ a_{Y_Q}=(d_1-1)w_1+\cdots+(d_n-1)w_n-1.$$
\end{tm}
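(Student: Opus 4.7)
The plan is to derive all three conclusions from the complete intersection structure of $I(Y_Q)$ established in Proposition \ref{prop:ideal_Y_Q}. Since we are in the case $w_0 = 1$, the hypothesis $w_0 \mid q-1$ of that proposition is automatic, so $I(Y_Q) = \langle F_1, \ldots, F_n\rangle$ with $\deg_{\bb} F_i = d_i w_i$, and moreover the numerical semigroup $\N\bb$ coincides with $\N$. The $a$-invariant assertion is then nothing but Proposition \ref{P:HilbertF} specialized to $w_0 = 1$, so it requires no extra work.

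For length and dimension I would first write down the Hilbert series explicitly. Because $I(Y_Q)$ is a complete intersection generated by homogeneous polynomials of weighted degrees $d_1 w_1, \ldots, d_n w_n$, the Koszul complex gives the minimal free resolution, and from \eqref{e:HilbSeries} one obtains
$$
HS_{S/I(Y_Q)}(t) \;=\; \frac{\prod_{i=1}^n (1 - t^{d_i w_i})}{(1-t)\prod_{i=1}^n (1-t^{w_i})} \;=\; \frac{1}{1-t}\,\prod_{i=1}^n \bigl(1 + t^{w_i} + t^{2w_i} + \cdots + t^{(d_i-1)w_i}\bigr).
$$
The key observation is that $H_{Y_Q}(\alpha)$, which equals $\dim\cC_{\alpha,Y_Q}$, is precisely the coefficient of $t^{\alpha}$ in this series.

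Next I would extract the two quantities from this coefficient. The length equals $|Y_Q|$, which is the eventual stable value of $H_{Y_Q}$; multiplication by $\tfrac{1}{1-t}$ converts to a partial sum, so for $\alpha$ past the degree of the polynomial factor the coefficient is just the value at $t = 1$ of $\prod_{i=1}^n \bigl(\sum_{k=0}^{d_i-1}t^{k w_i}\bigr)$, namely $d_1\cdots d_n$. For general $\alpha$, the coefficient counts tuples $(m_0, m_1, \ldots, m_n)$ with $m_0 \geq 0$, $0 \leq m_i \leq d_i - 1$, and $m_0 + m_1 w_1 + \cdots + m_n w_n = \alpha$. Eliminating $m_0$ turns the equation into the inequality $m_1 w_1 + \cdots + m_n w_n \leq \alpha$, and peeling off the variables in reverse order produces exactly the nested sum stated in the theorem, where each upper bound $\min\{\lfloor\cdot/w_i\rfloor, d_i-1\}$ reflects the two constraints (non-negativity of $m_0$ after the remaining choices, and the box condition $m_i \leq d_i - 1$).

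The only place requiring a small amount of care is the passage from the Hilbert series to the combinatorial count, namely ordering the nested sums and verifying that the $\min$ in each upper bound correctly encodes both the $\alpha$-budget and the box constraint; this is a bookkeeping step rather than a genuine obstacle. Otherwise the proof is essentially a direct assembly of Proposition \ref{prop:ideal_Y_Q}, the Koszul resolution, and Proposition \ref{P:HilbertF}.
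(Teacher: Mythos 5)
Your argument is correct, and it arrives at the same combinatorial count as the paper by a genuinely different algebraic route. The paper proves the dimension formula with a Gr\"obner basis argument: for a term order making $x_0$ smallest, the leading monomials $x_i^{d_i}$ of the generators $F_i$ are pairwise coprime, so $F_1,\dots,F_n$ form a Gr\"obner basis of $I(Y_Q)$, and the standard monomials $x_0^{m_0}x_1^{m_1}\cdots x_n^{m_n}$ of degree $\alpha$ with $m_i\le d_i-1$ for $i\ge 1$ give an explicit vector-space basis of $S_\alpha/I_\alpha(Y_Q)$; counting these tuples yields the nested sum. You instead read off the same count as the coefficient of $t^{\alpha}$ in the Hilbert series $\prod_{i=1}^n(1-t^{d_iw_i})/\bigl((1-t)\prod_{i=1}^n(1-t^{w_i})\bigr)$ coming from the Koszul resolution, which is legitimate and essentially repackages Proposition \ref{P:HilbertF}; the trade-off is that the Gr\"obner route produces an explicit monomial basis of the code with no cancellation bookkeeping, while your route gets the count directly from the factored series but must still verify the coefficient-to-lattice-point translation. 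For the length, the paper computes $|Y_Q|$ elementarily by exhibiting $Y_Q$ as the internal direct product of the cyclic groups $\langle[1:\cdots:\eta_i:\cdots:1]\rangle$ of orders $d_i$ (a decomposition it reuses in the proof of Theorem \ref{t:codesOnDegenToriP(1,1,a)}), whereas you obtain $|Y_Q|=d_1\cdots d_n$ as the stable value $p(1)$ of the Hilbert function; this is valid but leans on the nontrivial fact that $H_{Y_Q}$ stabilizes at $|Y_Q|$ (Proposition \ref{p:a-invariant}) rather than on a direct count of points. The $a$-invariant is handled identically in both proofs, by specializing Proposition \ref{P:HilbertF} to $w_0=1$.
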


\begin{proof}
We first prove that 
\begin{equation} \label{e:ProCyclics}
Y_Q=\langle [1:\eta_1:1:\ldots :1] \rangle \times \cdots \times \langle [1:\ldots:1:\eta_n] \rangle.
\end{equation}
Multiplying by $[\lambda:\lambda^{w_1}:\ldots :\lambda^{w_n}]$ does not change an equivalence class for every $\lambda\in \F_q^*$. So, we have the equality of the following points:
$$[t_0:t_1^{w_1}:\ldots :t_n^{w_n}]=[1:(t_1/t_0)^{w_1}:\ldots :(t_n/t_0)^{w_n}].$$ Hence, we have
$$Y_Q=\{[1:s_1^{w_1}:\ldots :s_n^{w_n}] \: | \: s_i \in \F_q^*, \text{ for all } i=1,\ldots,n\}.$$ 
Since $s_i=\eta^{k_i}$, for some $k_i\in \N$, it is clear that $s_i^{w_i}=\eta_i^{k_i}$ and thus
$$Y_Q=\{[1:\eta_1^{i_1}:\ldots :\eta_n^{i_n}]  \: | \: 0\leq i_1\leq d_1, \dots, 0\leq i_n\leq d_n\},$$ 
from which the claim in (\ref{e:ProCyclics}) is deduced, and thus $|Y_Q|=d_1\cdots d_n.$

If $w_0=1$, then $d_0=q-1$ and so the vanishing ideal of $Y_Q$ is generated by the binomials $F_i = x_i^{d_i} - x_0^{d_i {w}_i}$, for $i=1,2,\ldots,n$. With respect to any term order  for which $x_0$ is the smallest variable, the leading monomial of $F_i$ is clearly $x_i^{d_i}$. Since the monomials $x_i^{d_i}$ and $x_j^{d_j}$ are relatively prime for different $i$ and $j$, it readily follows that the binomials $F_1,\dots, F_n$ form a Groebner basis for the vanishing ideal $I(Y_Q)$. It is well-known ( \cite[p.232]{CLO2007}) then that a basis for the vector space $S_{\aa}/I_{\aa}(Y_Q)$ is given by the monomials  $\x^{\m}=x_0^{m_0}x_1^{m_1}\cdots x_n^{m_n}$ of degree $\aa$ that can not be divided by the leading monomials $x_i^{d_i}$ of $F_i$, for all $i=1,2,\ldots,n$ and for 
\[\aa=m_0+m_1w_1+\cdots+m_nw_n\in \N=\langle 1,w_1,\dots,w_n \rangle.
\]
Therefore, a basis for $S_{\aa}/I_{\aa}(Y_Q)$ corresponds to the set of tuples $(m_0,m_1,\dots,m_n)$ satisfying $\aa=m_0+m_1w_1+\cdots+m_nw_n$ and $m_i \leq d_i-1$, for all $i=1,2,\ldots,n$. The elements of this set can be identified step by step as we explain now. We start first by choosing an integer $m_n$ between $0$ and $\min \{\lfloor \frac{\alpha}{w_n} \rfloor, d_n-1\}$ and observe that the elements of the set in question can be partitioned into subsets for every choice of $m_n$ in the aforementioned range. More precisely, for each fixed $m_n$, we have a subset consisting of tuples $(m_0,m_1,\dots,m_n)$ satisfying 
\[m_0+m_1w_1+\cdots+m_{n-1}w_{n-1}=\aa-m_nw_n \text{ and } m_i \leq d_i-1, \text{ for all } i=1,2,\ldots,n-1.\]
As a second step, we fix $m_{n-1}$ between $0$ and $\min \{\lfloor \frac{\aa-m_nw_n}{w_{n-1}} \rfloor, d_{n-1}-1\}$, and look for the solutions $(m_0,m_1,\dots,m_{n-2})$ satisfying 
\[m_0+m_1w_1+\cdots+m_{n-2}w_{n-2}=\aa-m_nw_n-m_{n-1}w_{n-1} \text{ and } m_i \leq d_i-1,\]
for all $i=1,2,\ldots,n-2$.
Continuing inductively, we end up with a unique $m_0$ satisfying 
\[m_0=\aa-m_nw_n-m_{n-1}w_{n-1}-\cdots-m_1w_1.
\]
Hence, the dimension of the code, which is nothing but the dimension of the vector space $S_{\aa}/I_{\aa}(Y_Q)$, is exactly the sum given by the formula
\[ \displaystyle \dim(\cC_{\alpha,Y_Q})= \sum_{m_n=0}^{\min\{\lfloor \frac{\alpha}{w_n} \rfloor,d_n-1\}}\sum_{m_{n-1}=0}^{\min\{\lfloor \frac{\alpha-m_nw_n}{w_{n-1}}\rfloor,d_{n-1}-1\}}\cdots \sum_{m_1=0}^{\min\{\lfloor \frac{\alpha-m_n w_n-\cdots -m_2w_2}{w_1}\rfloor,d_1-1\}}1.
\]
 The $a-$invariant can be obtained from Proposition \ref{P:HilbertF}, by substiting $w_0=1$.
\end{proof}

\section{Codes on $Y_Q \subset \Pp(1,1,a)$}
For any positive integer $a$, we compute the basic parameters of the code 
$\cC_{\alpha,Y_Q}$, for the subgroup $Y_Q = \{[t_0:t_1:t_2^a] \: | \: t_0,t_1,t_2 \in \F_q^*\}$ of $T_X(\F_q)$ for the weighted projective space $X=\Pp(1,1,a)$.
\begin{tm} \label{t:codesOnDegenToriP(1,1,a)} Let $d_2= \frac{q-1}{\gcd(a,q-1)}$, $k=\lfloor \frac{\alpha-(q-2)}{a} \rfloor$ and $\mu_2=\min \{\lfloor \frac{\alpha}{a} \rfloor, d_2-1\}$.
Then, the length of $\cC_{\alpha,Y_Q}$ is $N=|Y_Q| = (q-1)d_2$. Its dimension $K(\cC_{\alpha,Y_Q})$ is 
\[ \displaystyle
\begin{array}{ll}
(\mu_2+1)(\aa+1-\mu_2a/2), & \mbox{if } 0\leq \alpha \leq q-2 \\
(q-1)(k+1)+(\mu_2-k)[\aa+1-(\mu_2+k+1)a/2], & \mbox{if } 0 < \aa- (q-2)<(d_2-1)a \\
N   & \text{otherwise}.
\end{array}
\]
and the minimum distance of $\cC_{\alpha,Y_Q}$ is:
$$\delta(\cC_{\alpha,Y_Q})=\left\{ 
\begin{array}{ll}
  d_2(q-1-\alpha)   & \mbox{if } 0\leq \alpha \leq q-2 \\
    d_2-k & \mbox{if } q-2 \leq \aa <(q-2)+(d_2-1)a \\
1    & \text{otherwise}.
\end{array}
\right.
$$

\end{tm}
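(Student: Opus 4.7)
The plan is to combine three ingredients: Theorem \ref{t:dimension} for the length and dimension, explicit product-type codewords for the upper bound on $\delta$, and the classical footprint bound in the quotient ring modeling functions on $Y_Q$ for the matching lower bound.

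For the length and dimension, specialize Theorem \ref{t:dimension} to $n=2$, $w_1=1$, $w_2=a$, so that $d_1=q-1$ and $N=(q-1)d_2$ is immediate. The dimension formula becomes $\sum_{m_2=0}^{\mu_2}(\min\{\alpha-am_2,\,q-2\}+1)$. In Case~1, $\alpha-am_2 \le \alpha \le q-2$, so the inner minimum simplifies to $\alpha-am_2$ and the sum telescopes via Gauss to $(\mu_2+1)(\alpha+1-a\mu_2/2)$. In Case~2, splitting the sum at $m_2=k$ yields $(k+1)(q-1)$ from the first $k+1$ terms (each equal to $q-1$, since $\alpha-am_2 \ge q-2$ for $m_2 \le k$) plus $(\mu_2-k)[\alpha+1-(\mu_2+k+1)a/2]$ from the remaining terms. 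Case~3 follows from $a_{Y_Q}=(q-2)+(d_2-1)a-1$ (by Proposition \ref{P:HilbertF}): the hypothesis places $\alpha \in \reg(Y_Q)$, so $\dim\cC_{\alpha,Y_Q}=N$.

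For the upper bound on $\delta$, I would build explicit product codewords. In Case~1, $F=\prod_{i=1}^{\alpha}(x_1-\beta_i x_0)$ with distinct $\beta_i \in \F_q^*$ vanishes on exactly the $\alpha d_2$ points $[1:\beta_i:s_2^{a}]$, yielding weight $d_2(q-1-\alpha)$. In Case~2, writing $\alpha-(q-2)=ka+r$ with $0\le r<a$, use
\[
F \;=\; x_0^{r}\prod_{i=1}^{q-2}(x_1-\beta_i x_0)\prod_{j=1}^{k}(x_2-\gamma_j x_0^{a}),
\]
where the $\beta_i$ exhaust $q-2$ elements of $\F_q^*$ and the $\gamma_j$ are distinct in the subgroup $H := \{z^a : z \in \F_q^*\}$ of order $d_2$ (possible since $k \le d_2-2$ in Case~2). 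Inclusion-exclusion on the zero locus gives $(q-2)d_2+k$ zeros, hence weight $d_2-k$. In Case~3, $\delta=1$ follows from Proposition \ref{trivialCodes}.

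For the matching lower bound, dehomogenize by setting $x_0=1$ to identify $\cC_{\alpha,Y_Q}$ with the image of $\{f \in \F_q[x_1,x_2] : \deg_w f \le \alpha\}$ (weights $(1,a)$) in $R := \F_q[x_1,x_2]/I$, where $I=(x_1^{q-1}-1,\,x_2^{d_2}-1)$ is the affine vanishing ideal of $\F_q^* \times H \cong Y_Q$. The generators of $I$ form a Gr\"obner basis under any admissible order, so the classical footprint bound yields that the weight of a nonzero $\bar f \in R$ is at least $(q-1-p_1)(d_2-p_2)$, where $(p_1,p_2)$ is the leading exponent of $\bar f$. Since no wrap-around monomials arise below the $a$-invariant, the reduced support set equals $\bar E_\alpha := \{(p_1,p_2) : 0 \le p_1 \le q-2,\, 0 \le p_2 \le d_2-1,\, p_1+ap_2 \le \alpha\}$, and every element of $\bar E_\alpha$ arises as a leading monomial (take $\bar f = x_1^{p_1}x_2^{p_2}$). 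Hence $\delta \ge \min\{(q-1-p_1)(d_2-p_2) : (p_1,p_2) \in \bar E_\alpha\}$. The main obstacle is verifying this minimum equals the claimed value: in Case~1 it is $d_2(q-1-\alpha)$, attained at $(\alpha,0)$, which reduces after rearrangement to the inequality $ad_2 \ge q-1$ (holding since $\gcd(a,q-1)\le a$); in Case~2 it is $d_2-k$, attained at $(q-2,k)$, where for $p_2=k+l$ with $l\ge 1$ the constraint $p_1+ap_2 \le \alpha$ forces $p_1 \le q-2-(al-r)$, so the product is at least $(1+al-r)(d_2-k-l) \ge d_2-k$ using $al-r\ge l$ (from $l(a-1)\ge r$) and $d_2-k-l\ge 1$ (from $p_2 \le d_2-1$). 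Matching these lower bounds against the constructed codewords completes the proof.
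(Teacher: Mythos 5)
Your proposal is correct, and the length/dimension computation together with the explicit extremal codewords $F=\prod_i(x_1-\beta_i x_0)$ and $F=x_0^{r}\prod_i(x_1-\beta_i x_0)\prod_j(x_2-\gamma_j x_0^{a})$ coincide with what the paper does (these are exactly its $F_0$ and $G_0$). Where you genuinely diverge is the lower bound on the minimum distance. The paper argues directly on homogeneous $F$: it sets $J_F=\{j: x_2-\eta_2^{j}x_0^{a}\mid F\}$, shows $|V_{Y_Q}(F)|\le d_1|J_F|+(d_2-|J_F|)\deg_{x_1}(F)$ by slicing $Y_Q$ along the $x_2$-coordinate, and then combines this with $\deg_{x_1}(F)\le\alpha-|J_F|a$ and a case analysis on $|J_F|$ versus $k$. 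You instead dehomogenize, note that $x_1^{q-1}-1,\ x_2^{d_2}-1$ is a Gr\"obner basis of the affine vanishing ideal of $\F_q^*\times H$, and apply the footprint bound $\mathrm{wt}(\ev(f))\ge(q-1-p_1)(d_2-p_2)$ for the leading exponent $(p_1,p_2)$ of the normal form, reducing the problem to minimizing this product over the lattice-point set $\bar E_\alpha$; your verifications of the two minima (via $ad_2\ge q-1$ in Case 1 and via $(1+l)(d_2-k-l)\ge d_2-k$ for $p_2=k+l\le d_2-1$ in Case 2) are correct. The footprint route is more systematic and transplants readily to other subgroups of the torus where the slicing argument would need to be redone, while the paper's argument stays homogeneous and makes the role of the linear factors of $F$ (and hence the shape of the extremal codeword) more transparent. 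One small caveat in your write-up: monomials of weighted degree $\le\alpha$ with $x_1$-exponent $\ge q-1$ do produce wrap-arounds under reduction, but they land back inside $\bar E_\alpha$, so your identification of the support set (and hence the set of possible leading exponents) still stands; you should state it that way rather than claiming no wrap-arounds occur.
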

\begin{proof} Since $w_1=1$, we have $d_1=q-1$. It follows from Equation \ref{e:ProCyclics} that $$Y_Q=\{[1:\eta_1^{i_1}:\eta_2^{i_2}] \: | \: 0\leq i_1 \leq d_1 \text{ and } 0\leq i_2 \leq d_2\},$$
so the length of the code is $d_1d_2=(q-1)d_2$.

When $0\leq \alpha \leq q-2$, the dimension formula in Theorem \ref{t:dimension} specializes to 
\begin{eqnarray}
\displaystyle \dim(\cC_{\alpha,Y_Q}) 
\nonumber &=& \sum_{m_2=0}^{\mu_2}\sum_{m_{1}=0}^{\min\{\alpha-m_2 a,q-2\}}1 = \sum_{m_2=0}^{\mu_2}\sum_{m_{1}=0}^{\alpha-m_2 a}1 \\
\nonumber &=&\sum_{m_2=0}^{\mu_2} (\alpha-m_2 a+1)=(\mu_2+1)(\aa + 1)-a\sum_{m_2=0}^{\mu_2} m_2\\
\nonumber &=&(\mu_2+1)(\aa + 1)-a\frac{\mu_2(\mu_2 +1)}{2}.
\end{eqnarray}

If $q-2 < \aa <(q-2)+(d_2-1)a$, then using the formula in Theorem \ref{t:dimension} again, we get
\begin{eqnarray}
\displaystyle \dim(\cC_{\alpha,Y_Q}) 
\nonumber &=& \sum_{m_2=0}^{\mu_2}\sum_{m_{1}=0}^{\min\{\alpha-m_2 a,q-2\}}1 \\
\nonumber &=& \sum_{m_2=0}^{k}\sum_{m_{1}=0}^{q-2}1 + \sum_{m_2=k+1}^{\mu_2}\sum_{m_{1}=0}^{\alpha-m_2 a}1 \\
\nonumber &=&(q-1)(k+1)+\sum_{m_2=k+1}^{\mu_2} (\alpha-m_2 a+1)\\
\nonumber &=&(q-1)(k+1)+(\mu_2-k)(\aa + 1)-a\sum_{m_2=k+1}^{\mu_2} m_2 \\
\nonumber &=&(q-1)(k+1)+(\mu_2-k)(\aa + 1)-a\frac{\mu_2(\mu_2 +1)-k(k+1)}{2}.
\end{eqnarray}
Notice that these dimensions are the number of lattice points of the polygons depicted below.
\begin{figure}[htb!]
\centering
\begin{minipage}{0.5\textwidth}
  \centering
\begin{tikzpicture}[scale=0.5]
\def\q{7};
\def\cOne{\q};
\def\cTwo{\q-2};
\def\l{2};

\def\bPrime{3};
\pgfmathsetmacro\c{\q-2};
\pgfmathsetmacro\cl{0.5*(\c)};
  %\pgfmathsetmacro\b'{Floor(3.5)};
			
			\pgfmathsetmacro\qMinusOne{\q-1};
			\pgfmathsetmacro\d{2};
            \pgfmathsetmacro\b{2};

\draw[step=1cm,gray!30!white,very thin] (0,0) grid (\q-1,\q-1);
\draw[thick,->] (0,0) -- (\q-1,0) node[anchor=south east] {};
\draw[thick,->] (0,0) -- (0,\q-1) node[anchor=south east] {};
\draw (\c,0) -- (0,\cl);

%\draw [dashed, blue, ultra thick] (1.75,4) -- (0,3.5);
%\draw[dashed, red , thick] (\cOne-1,0) -- (\cOne-1,\cTwo-1) --(0,\cTwo-1);
%\node [left] at (0,\cTwo-1) {$c_2$-$1$};
\node [below] at (\cOne-1,0) {$q$-$2$};

%\draw[dashed, red , thick] (\c-\l*\b,0) -- (\c-\l*\b,\b) --(0,\b);
%\draw[dashed, red , thick] (\cOne-1,\cl-1) --(0,\cl-1);
%\draw[dashed, red , thick] (\c-\l*\bPrime,0) -- (\c-\l*\bPrime,\bPrime) --(0,\bPrime);
\node [above right] at (\c-\l*\b,\b) {$m_1+ m_2a=\alpha$};
\node [left] at (-0.12,\cl+0.14) {$\alpha$/a};
%\node [left] at (0,\cl-1) {$(c$-$c_1$+$1$)/\ell$};

%\node [left] at (0,\bPrime) {$b'$};
\node [left] at (0,\b) {$\mu_2$};
\node [below] at (\c,-0.12) {$\alpha$};
\node [below left] at (0,0) {$0$};
%\node [below] at (\c-\l*\bPrime,0) {$c-\ell b'$};
\node [below] at (\c-\l*\b,0) {$\alpha$- $\mu_2 a$};
\filldraw[fill=blue!40!white, draw=black] (0,0) rectangle (\c-\l*\b,\b);
\filldraw[fill=red!40!white, draw=black] (\c-\l*\b,0)-- (\c-\l*\b,\b)--(\c,0)--cycle;
\end{tikzpicture} 
  \caption{$\alpha\leq q-2$}
  \label{fig:case1a}
\end{minipage}%
\begin{minipage}{0.5\textwidth}
  \centering
    \begin{tikzpicture}[scale=0.5]
\def\q{11};
\def\cOne{\q-5};
\def\cTwo{7};
\def\l{2};

\def\bPrime{2};
\pgfmathsetmacro\c{\q-1};
\pgfmathsetmacro\cl{0.5*(\c)};
  %\pgfmathsetmacro\b'{Floor(3.5)};
			
			\pgfmathsetmacro\qMinusOne{\q-1};
			\pgfmathsetmacro\d{3};
            \pgfmathsetmacro\b{3};

\draw[step=1cm,gray!30!white,very thin] (0,0) grid (\q-1,\q-5);
\draw[thick,->] (0,0) -- (\q-1,0) node[anchor=south east] {};
\draw[thick,->] (0,0) -- (0,\q-5) node[anchor=south east] {};
\draw (\c,0) -- (0,\cl);

\draw [dashed, red, thick] (\cOne-1,0) -- (\cOne-1,0.5+\bPrime) --(0,0.5+\bPrime);
%\draw[dashed, red , thick] (\cOne-1,0) -- (\cOne-1,\cTwo-1) --(0,\cTwo-1);
%\node [left] at (0,\cTwo-1) {$c_2$-$1$};
\node [below] at (\cOne-1,0) {$q$-$2$};

%\draw[dashed, red , thick] (\cOne-1,\cl-1) --(0,\cl-1);
%\draw[dashed, red , thick] (\c-\l*\bPrime,0) -- (\c-\l*\bPrime,\bPrime) --(0,\bPrime);
\node [above right] at (\c-\l*\bPrime-2,\bPrime+1) {$m_1+m_2a=\alpha$};
\node [left] at (0,\cl) {$\alpha/a$};
%\node [left] at (0,\cl-1) {$(c$-$c_1$+$1$)/\ell$};
\node [left] at (0,\bPrime+2) {$\mu_2$};
\node [left] at (0,\bPrime+1) {$k+1$};
\node [left] at (0,\bPrime) {$k$};
\node [below] at (\c,0) {$\alpha$};
\node [below left] at (0,0) {$0$};
%\node [below] at (\c-\l*\bPrime,0) {$c-\ell b'$};
\node [below] at (\c-\l*\bPrime,0) {};
\filldraw[fill=blue!40!white, draw=black] (0,0) rectangle (\cOne-1,\bPrime);
\filldraw[fill=red!40!white, draw=black] (0,1+\bPrime)--(\c-\l*\bPrime-\l,\bPrime+1)--(\c-\l*\bPrime-\l-2,\bPrime+2)-- (0,\bPrime+2)--cycle;
\end{tikzpicture}
  \caption{$\alpha>q-2$}
  \label{fig:case1b}
\end{minipage}
\end{figure}

As for the minimum distance, we first give an upper bound on the number $|V_{Y_Q}(F)|$ of zeroes on $Y_Q$ of a homogeneous polynomial $F$ of degree $\aa$ and  then demonstrate a specific polynomial attaining that bound.
Let $[d_2]$ denote the set of non-negative integers smaller than $d_2$, and set $$J_F:=\{j\in [d_2] \: | \: x_2-\eta_2^{j}x_0^{w_2} \text{ divides } F\}.$$
We claim that
\begin{equation}\label{e:upperbound}
 |V_{Y_Q}(F)|\leq d_1|J_F|+(d_2-|J_F|)\deg_{x_1}(F),  
\end{equation}
where $\deg_{x_1}(F)$ is the usual degree of $F$ in the variable $x_1$. The polynomial $f_j(x_1):=F(1,x_1,\eta_2^{j})\in \F_q[x_1]$ vanishes at the points $[1:\eta_1^{i}:\eta_2^{j}]$, for every $i\in [d_1]$, when $j\in J_F$. Thus, there are $d_1|J_F|$ such roots of $F$. On the other hand, $f_j$ is not a zero polynomial when $j\notin J_F$, and in this case it can have at most its degree many zeroes, giving rise to $(d_2-|J_F|)\deg_{x_1}(F)$ many roots of $F$, completing the proof of the claim.

Since we always have 
$$\dis F=\prod_{j=1}^{|J_F|} (x_2-\eta_2^{j}x_0^{w_2}) F'$$
it follows that $\deg_{x_1}(F)=\deg_{x_1}(F')\leq \aa-|J_F|w_2$. Thus, we have
\begin{eqnarray}
\nonumber |V_{Y_Q}(F)| &\leq& d_1|J_F|+(d_2-|J_F|)(\aa-|J_F|w_2)\\
 &\leq& d_2\aa+|J_F|(d_1-\aa-w_2(d_2-|J_F|)).
\end{eqnarray}
Notice that the number in the parenthesis above is
\begin{eqnarray*}
d_1-\aa-w_2(d_2-|J_F|)=d_1-\aa-w_2d_2+w_2|J_F|=d_1-(q-1)\Tilde{w}_2-\aa +w_2|J_F|
\end{eqnarray*}
which is non-positive since $d_1 \leq q-1 \leq (q-1)\Tilde{w}_2$ and $|J_F|w_2\leq \deg(F)= \aa$. Hence,
altogether, we have the upper bound
\begin{eqnarray}
|V_{Y_Q}(F)| \leq d_2\aa.
\end{eqnarray}
Consider now the following polynomial:
$$\dis F_0=\prod_{i=1}^{\aa} (x_1-\eta_1^{i}x_0^{})$$
which  vanishes at the points $[1:\eta_1^{i}:\eta_2^{j}]$, for every $i\in [\aa]$ and $j\in [d_2]$,  implying that  $|V_{Y_Q}(F_0)| = d_2\aa.$
As the weight of the codeword $\ev_{Y_Q}(F_0)$ is clearly $$|Y_Q|-|V_{Y_Q}(F_0)|=d_2(q-1)-d_2\aa$$ and that of a general codeword $\ev_{Y_Q}(F)$ is $$|Y_Q|-|V_{Y_Q}(F)| \geq d_2(q-1)-d_2\aa,$$ it follows that the minimum distance of the code is $d_2(q-1-\aa)$, when $\aa < q-1$.

When $\aa\geq a_Y+1=(q-2)+(d_2-1)a$, the code is trivial, so $\delta(\cC_{\alpha,Y_Q})=1$. 

From now on, assume that $q-2 \leq \aa < a_Y+1=(q-2)+(d_2-1)a$. Let $k$ be the quotient and $r_0$ be the remainder of the division of $\alpha-(q-2)$ by $w_2=a$, i.e.
$$\alpha-(q-2)=ka+r_0 \text{ where } 0\leq k:= \left\lfloor \frac{\alpha-(q-2)}{a} \right\rfloor \leq d_2-2 \text{ and } 0\leq r_0 \leq a-1.$$
 When $|J_F|=d_2$, $F$ vanishes on $Y_Q$, so $F$ gives a codeword with zero weight. Thus, we suppose $|J_F| \leq d_2-1$. 
 
 If $|J_F| \leq k$ also, then by $\deg_{x_1}(F) \leq d_1-1=q-2$ and (\ref{e:upperbound}) we have
\begin{eqnarray*}
 |V_{Y_Q}(F)| &\leq& (q-1)|J_F|+(d_2-|J_F|)(q-2)=(q-2)d_2+|J_F|\\
 &\leq& (q-2)d_2+k.
\end{eqnarray*}
 
If $|J_F| > k$, we let $|J_F|=k+j_0$ with $j_0\geq 1$. As $\aa=q-2+ka+r$ with $r\leq a-1$, we have $\aa-|J_F|a=\aa-(k+1)a-a(|J_F|-k-1)\leq q-2-1-a(j_0-1)$. As $\deg_{x_1}(F) \leq \aa-|J_F|a$, it follows from (\ref{e:upperbound}) that we have,
\begin{eqnarray*}
  |V_{Y_Q}(F)| &\leq& (q-1)|J_F|+(d_2-|J_F|)(\aa-|J_F|a)\\
 &\leq& (q-1)|J_F|+(d_2-|J_F|)(q-2-1-a(j_0-1))\\
  &\leq& (q-2)d_2+|J_F|+(d_2-|J_F|)(-1-a(j_0-1))
\end{eqnarray*}
Since $d_2-|J_F|\geq 1$ and $a\geq 1$, we have
\begin{eqnarray*}
  |V_{Y_Q}(F)| &\leq& (q-2)d_2+|J_F|+(d_2-|J_F|)(-1-a(j_0-1))\\
 &\leq&  (q-2)d_2+|J_F|-1-(j_0-1)=(q-2)d_2+k.
\end{eqnarray*}

We consider the following homogeneous polynomial of degree $\aa$  now:
$$\dis G_0=x_0^{r_0}\prod_{i=1}^{q-2} (x_1-\eta_1^{i}x_0^{})\prod_{j=1}^{k} (x_2-\eta_2^{j}x_0^{a})$$
which vanish at the points $[1:\eta_1^{i}:\eta_2^{j}]$, for every $i\in [q-2]$ and $j\in [d_2]$, together with the points $[1:\eta_1^{i}:\eta_2^{j}]$, for $i=q-1$ and $j\in [k]$. Therefore,  the number of roots is $|V_{Y_Q}(G_0)| =(q-2)d_2+k.$ It readily follows that the minimum distance $\delta(\cC_{\alpha,Y_Q})$ of the code is the weight $(q-1)d_2-(q-2)d_2-k=d_2-k$ of the codeword corresponding to $G_0$.
\end{proof}
 \begin{remark}
It is very difficult to give a closed formula for some parameters of the code $\cC_{\alpha,Y_Q}$, for the subgroup $Y_Q = \{[t_0:t_1^{w_1}:t_2^{w_2}] \: | \: t_0,t_1,t_2 \in \F_q^*\}$ of $T_X(\F_q)$ in the more general case of the weighted projective plane  $X=\Pp(1,w_1,w_2)$. This is mainly because of the formulas involving a division by the integer $w_1$. In this case one has to use the floor function when the ratio is not integer, which we explain in more details below. There are $4$ cases to consider:
\begin{eqnarray*}
Case ~1:  \left\lfloor\frac{\alpha}{w_1}\right\rfloor \leq d_1-1 \text{ and } \left\lfloor\frac{\alpha}{w_2}\right\rfloor \leq  d_2-1\\
Case ~2:  \left\lfloor\frac{\alpha}{w_1}\right\rfloor > d_1-1 \text{ and } \left\lfloor\frac{\alpha}{w_2}\right\rfloor \leq  d_2-1\\
Case ~3:  \left\lfloor\frac{\alpha}{w_1}\right\rfloor \leq d_1-1 \text{ and } \left\lfloor\frac{\alpha}{w_2}\right\rfloor >  d_2-1\\
Case ~4:  \left\lfloor\frac{\alpha}{w_1}\right\rfloor > d_1-1 \text{ and } \left\lfloor\frac{\alpha}{w_2}\right\rfloor >  d_2-1.
\end{eqnarray*} 
For instance, the dimension formula in Theorem \ref{t:dimension} specializes to 
\begin{eqnarray}
\displaystyle \dim(\cC_{\alpha,Y_Q}) 
\nonumber &=& \sum_{m_2=0}^{\mu_2 } \sum_{m_{1}=0}^{\min\left \{ \left\lfloor\frac{\alpha-m_2 w_2}{w_1}\right\rfloor,d_1-1\right\}} 1,
\end{eqnarray}  
where $\mu_2=\min \left\{ \left\lfloor\frac{\alpha}{w_2}\right\rfloor, d_2-1\right\}$. 

%It follows that there is an integer $\mu'_2$ such that $0\leq \mu'_2\leq \mu_2$ playing the role of $k$ in case $w_1=1$ for the Cases $2$ and $4$. As we have $\aa \geq w_1(d_1-1)$, we choose \[\mu'_2:=\left \lfloor \frac{\alpha-w_1(d_1-1)}{w_2}  \right\rfloor.\]
We first consider Cases 2 and 4. In these cases, as we have $\aa \geq w_1(d_1-1)$, we choose \[\mu'_2:=\left \lfloor \frac{\alpha-w_1(d_1-1)}{w_2}  \right\rfloor\] 
so that  $\min\left \{ \left\lfloor\frac{\alpha-m_2 w_2}{w_1}\right\rfloor,d_1-1\right\}=d_1-1$ for all $m_2\leq \mu'_2$. Hence, we have 
\begin{eqnarray*}
\displaystyle \dim(\cC_{\alpha,Y_Q}) 
\nonumber &=& \sum_{m_2=0}^{\mu'_2 } \sum_{m_{1}=0}^{d_1-1} 1+
 \sum_{m_2=\mu'_2+1 }^{\mu_2 } \sum_{m_{1}=0}^{\left\lfloor\frac{\alpha-m_2 w_2}{w_1}\right\rfloor} 1\\
 &=& (\mu'_2+1)  d_1 +
 \sum_{m_2=\mu'_2+1}^{\mu_2 } \left(\left\lfloor\frac{\alpha-m_2 w_2}{w_1}\right\rfloor+1\right)\\
 &=& \mu'_2  (d_1-1)+\mu_2+d_1 +
  \left\lfloor\frac{\alpha-(\mu'_2+1) w_2}{w_1}\right\rfloor+\cdots+\left\lfloor\frac{\alpha-\mu_2 w_2}{w_1}\right\rfloor.
\end{eqnarray*} 
A similar formula for $\dim(\cC_{\alpha,Y_Q})$ can be obtained in Cases $1$ and $3$. In any case, we conclude that a closed formula for the dimension is difficult to get.

 As for the minimum distance, one can generalize Theorem \ref{t:codesOnDegenToriP(1,1,a)} as follows.
Let $U(x,y)$ be a polynomial defined as
\[U(x,y):=d_1y+(d_2-y)x \text{ for } 0\leq x \leq \min \left\{ \left\lfloor\frac{\alpha- yw_2}{w_1}\right\rfloor, d_1-1\right\}, 0\leq y \leq \mu_2.
\] 
Then, the upper bound in (\ref{e:upperbound}) becomes
\begin{equation*}
 |V_{Y_Q}(F)|\leq U(\deg_{x_1}(F),|J_F|).  
\end{equation*}
Since we have
\[
 x\leq \min \left\{ \left\lfloor\frac{\alpha- yw_2}{w_1}\right\rfloor, d_1-1\right\}=\left\{ 
\begin{array}{ll}
      d_1-1 & \mbox{if } 0\leq y \leq \mu'_2\\
      \left\lfloor\frac{\alpha- yw_2}{w_1}\right\rfloor   & \mbox{if } \mu'_2< y \leq \mu_2,
\end{array}
\right.
\]
it follows that 
\[U(x,y)\leq u(y):=d_1y+(d_2-y)\min \left\{ \left\lfloor\frac{\alpha- yw_2}{w_1}\right\rfloor, d_1-1\right\}.
\] 
Therefore, we get
\[U(x,y)\leq d_1y+(d_2-y)(d_1-1)=d_2(d_1-1)+y \text{ for all } y\in [0,\mu'_2]
\] 
\[
\text{and } U(x,y)< d_1y+(d_2-y)(d_1-1) \text{ for all } y\in [\mu'_2+1,\mu_2].
\]
Clearly, the polynomial $U(x,y) $ attains the maximum value at $(d_1-1,\mu_2'$), which is $$u(\mu'_2)=d_2(d_1-1)+\mu'_2=d_1d_2-(d_2-\mu'_2).$$ Thus, the minimum distance of the code $\cC_{\alpha,Y_Q}$ will be $d_2-\mu'_2$. This means that the proof of the second part of the Theorem \ref{t:codesOnDegenToriP(1,1,a)} can be generalized very easily via replacing $q-1$ (resp. $q-2$) by $d_1$ (resp. $d_1-1$) to the Cases $2$ and $4$, namely for the values of $\aa$ satisfying $w_1(d_1-1)\leq \aa <w_1(d_1-1)+w_2(d_2-1)$.

 When $w_1=1$, as in the proof of the first part of the Theorem \ref{t:codesOnDegenToriP(1,1,a)}, the ratio $\frac{\alpha- yw_2}{w_1}$ was an integer yielding $x=\left\lfloor\frac{\alpha- yw_2}{w_1}\right\rfloor=\alpha- yw_2$, and so the upper bound was
 \[u(y)=d_1y+(d_2-y)(\alpha- yw_2)=w_2y^2+(d_1-\aa-d_2w_2)y+d_2\alpha.
 \]
 As the quadratic polynomial $u(y)$  is concave up, it was clear that the absolute maximum is attained at the boundary points of the interval $[0,\lfloor \aa/a\rfloor]$ and we were able to prove that $u(0)$ was the maximum value of $U$. However, the proof of the first case does not generalize to the Cases $1$ and $3$ as the maximum values are sometimes attained at interior points. 
 
 For instance, consider the case $q=31$, $w_1=8$, $w_2=9$ and $\aa=34$. Then, we have $d_1 = 15$, $d_2 = 10, \left\lfloor\frac{\alpha}{w_1}\right\rfloor=4$ and $\left\lfloor\frac{\alpha}{w_2}\right\rfloor=3$. The function
 \[U(x,y):=15y+(10-y)x \text{ for } 0\leq x \leq \left\lfloor\frac{34- 9y}{8}\right\rfloor, 0\leq y \leq 3
\] 
has the upper bound given by  
 \[u(y)=15y+(10-y)\left\lfloor\frac{34- 9y}{8}\right\rfloor, 0\leq y \leq 3.
 \]
Notice that $[u(0),u(1),u(2),u(3)]=[40, 42, 46, 45]$. Therefore, the maximum value $46$ is attained at the interior point $y=2$. 

As the value $u(0)=\left\lfloor\frac{\alpha}{w_1}\right\rfloor$ gives rise to an upper bound on the minimum distance in Cases $1$ and $3$, we have the following:
  
 \begin{tm} \label{t:codesOnDegenToriP(1,a,b)} Let $d_1= \frac{q-1}{\gcd(w_1,q-1)}$, $d_2= \frac{q-1}{\gcd(w_2,q-1)}$ and $k=\left\lfloor \frac{\alpha-w_1(d_1-1)}{w_2} \right\rfloor$.
Then, the length of $\cC_{\alpha,Y_Q}$ is $N=|Y_Q| = d_1d_2$. The minimum distance of $\cC_{\alpha,Y_Q}$ satisfies
$$
\begin{array}{ll}
  \delta(\cC_{\alpha,Y_Q}) \leq d_2(d_1- \left\lfloor\frac{\alpha}{w_1}\right\rfloor)   & \mbox{if } 0\leq \alpha \leq w_1(d_1-1) \\
    \delta(\cC_{\alpha,Y_Q})=d_2-k & \mbox{if } w_1(d_1-1)\leq \aa <w_1(d_1-1)+w_2(d_2-1) \\
\delta(\cC_{\alpha,Y_Q})=1    & \text{otherwise}.
\end{array}
$$

\end{tm}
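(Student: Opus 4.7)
The plan is to follow the approach outlined in the remark preceding the theorem, which extends the argument of Theorem \ref{t:codesOnDegenToriP(1,1,a)} to general weights. First, the length $N = |Y_Q| = d_1 d_2$ is immediate from the product decomposition (\ref{e:ProCyclics}) specialized to $n = 2$. For the third range, Theorem \ref{t:dimension} gives $a_{Y_Q} = w_1(d_1-1) + w_2(d_2-1) - 1$, so any $\aa \geq w_1(d_1-1) + w_2(d_2-1)$ lies in $\reg(Y_Q) = 1 + a_{Y_Q} + \N$, which forces $\delta(\cC_{\aa,Y_Q}) = 1$ by Proposition \ref{trivialCodes}.

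In Case 1, when $0 \leq \aa \leq w_1(d_1-1)$, I would exhibit the homogeneous polynomial
\[
F_0 = x_0^{\aa - w_1\lfloor \aa/w_1 \rfloor}\prod_{i=1}^{\lfloor \aa/w_1 \rfloor}(x_1 - \eta_1^i x_0^{w_1})
\]
of weighted degree $\aa$. This $F_0$ vanishes at every $[1:\eta_1^i:\eta_2^{i_2}]$ with $1 \leq i \leq \lfloor \aa/w_1\rfloor$ and $0 \leq i_2 \leq d_2-1$, giving $|V_{Y_Q}(F_0)| = d_2\lfloor \aa/w_1\rfloor$. The weight of $\ev_{Y_Q}(F_0)$ is therefore $d_2(d_1-\lfloor \aa/w_1\rfloor)$, which proves the claimed upper bound on $\delta$.

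The heart of the proof is Case 2. For the lower bound $\delta \geq d_2 - k$, I would set $J_F = \{j \in [d_2] : x_2 - \eta_2^j x_0^{w_2} \text{ divides } F\}$ and combine (\ref{e:upperbound}) with the factorization estimate $\deg_{x_1}(F) \leq \lfloor (\aa - |J_F|w_2)/w_1 \rfloor$ coming from $F = F'\prod_{j \in J_F}(x_2 - \eta_2^j x_0^{w_2})$. Writing $\aa = w_1(d_1-1) + kw_2 + r$ with $0 \leq r < w_2$, I split on $|J_F|$: the sub-case $|J_F| \leq k$ together with $\deg_{x_1}(F) \leq d_1 - 1$ directly gives $|V_{Y_Q}(F)| \leq d_2(d_1-1) + k$, while the sub-case $|J_F| = k + j$ with $j \geq 1$ uses the sharper bound $\deg_{x_1}(F) \leq d_1 - 1 - \lceil (jw_2 - r)/w_1\rceil$ and reduces, after algebra, to the inequality $j \leq (d_2-k-j)\lceil (jw_2-r)/w_1\rceil$. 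To realize the matching upper bound on $\delta$, I construct
\[
G_0 = x_0^{r}\prod_{i=1}^{d_1-1}(x_1 - \eta_1^i x_0^{w_1})\prod_{j=1}^{k}(x_2 - \eta_2^j x_0^{w_2})
\]
of weighted degree $\aa$ and compute $|V_{Y_Q}(G_0)| = d_2(d_1-1) + k$ by inclusion-exclusion, yielding a codeword of weight $d_2 - k$.

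The main obstacle is to justify the inequality $j \leq (d_2-k-j)\lceil (jw_2-r)/w_1\rceil$ uniformly over the valid range $1 \leq j \leq \mu_2 - k$, where $\mu_2 = \min\{\lfloor \aa/w_2\rfloor, d_2-1\}$. In Theorem \ref{t:codesOnDegenToriP(1,1,a)} the case $w_1 = 1$ makes the ceiling unnecessary and the argument collapses to a simple linear bound, but for general $w_1$ the naive polynomial inequality $jw_1 \leq (d_2-k-j)(jw_2-r)$ can fail, so one must exploit the rounding. I anticipate splitting once more on $w_2 \geq w_1$ versus $w_2 < w_1$: in the first case $\lceil (jw_2-r)/w_1\rceil \geq j$ holds directly and $d_2-k-j \geq 1$ closes the gap, while in the second case the cap $jw_2 \leq w_1(d_1-1) + r$ coming from $|J_F| \leq \mu_2$ must be combined with the integrality of the ceiling to extract the slack from $d_2-k-j$.
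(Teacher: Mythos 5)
Your overall architecture matches the paper's (the theorem's proof is the analysis carried out in the remark that contains it): the length from (\ref{e:ProCyclics}), the trivial range via $a_{Y_Q}=w_1(d_1-1)+w_2(d_2-1)-1$ together with Proposition \ref{trivialCodes}, the first-range upper bound realized by your $F_0$, the reduction of the middle-range lower bound to the inequality $j\leq (d_2-k-j)\bigl\lceil (jw_2-r)/w_1\bigr\rceil$, and the matching codeword $G_0$ of weight $d_2-k$ are all correct and are exactly what the paper does. Your sub-case $w_2\geq w_1$ is also complete: $jw_2-r>(j-1)w_2\geq (j-1)w_1$ gives $\lceil (jw_2-r)/w_1\rceil\geq j$, and $d_2-k-j\geq 1$ closes it.

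The genuine gap is your second sub-case $w_2<w_1$, which you only ``anticipate'' resolving; no argument along those lines can succeed, because both your key inequality and the stated equality $\delta=d_2-k$ are false there. Take $q=31$, $w_1=10$, $w_2=1$, so $d_1=3$, $d_2=30$, and $\alpha=40$, which lies in the middle range $20\leq\alpha<49$, with $k=20$ and $r=0$. For $j=9$, i.e.\ $|J_F|=29\leq \mu_2=29$, your cap $jw_2=9\leq w_1(d_1-1)+r=20$ is satisfied, yet $(d_2-k-j)\lceil (jw_2-r)/w_1\rceil=1\cdot\lceil 9/10\rceil=1<9=j$. Worse, this is not merely a failure of the bounding technique: with $\eta_1=\eta^{10}$ and $\eta_2=\eta$, the weighted-homogeneous polynomial of degree $40$
\[
F \;=\; x_0\,\bigl(x_1-\eta_1 x_0^{10}\bigr)\prod_{j=1}^{29}\bigl(x_2-\eta_2^{\,j}x_0\bigr)
\]
vanishes at the $87$ points $[1:\eta_1^{i}:\eta_2^{j}]$ with $j\neq 0$ and additionally at $[1:\eta_1:1]$, but not at $[1:1:1]$ or $[1:\eta_1^{2}:1]$, so $\ev_{Y_Q}(F)$ is a nonzero codeword of weight $2$, whereas the theorem asserts $\delta=d_2-k=10$. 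The same numbers expose the step in the paper's own argument that you were right to distrust: the claim that $U(x,y)$ ``clearly'' attains its maximum at $(d_1-1,\mu_2')$ fails here, since $U(1,29)=3\cdot 29+1=88>80=d_2(d_1-1)+k$. The statement is thus tacitly using the ordering $w_1\leq w_2$ (consistent with the notation $\Pp(1,a,b)$ and the model case $\Pp(1,1,a)$, where $w_1=1$). The correct repair is to add that hypothesis explicitly, under which your first sub-case already completes the middle range and your second split is vacuous — not to hunt for extra integrality slack, which does not exist.
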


\end{remark}

We conclude the paper by showcasing an example with codes having good parameters obtained by our construction.
\begin{example}
Take $a=2$, $q=5$. So, $d_2=2$ and length is $d_2(q-1) = 8$. Table \ref{tab:my_table1} exhibits the main parameters of the code $\cC_{\alpha,Y_Q}$ for $\aa$ in the first column. According to Markus Grassl's Code Tables \cite{Grassl:codetables} a best-possible code with $N=8$ has $K+\delta=8$ or $K+\delta=9$ (MDS codes). This example provides us with $3$ best possible codes whose parameters satisfy $K+\delta=8$ together with an MDS code $[8,7,2]$.
\begin{table}[ht]
    \centering
    \caption{a=2 and q=5 }
    \begin{tabular}{|c|c|}
    \hline
	{$\mathbf{\alpha}$ \cellcolor{gray!20}}& 
	{$[N, K, \delta]$ \cellcolor{gray!20}}\\

    \hline
         $0$ & $[8,1,8]$ \\
         \hline
       $1$ & $[8,2,6]$ \\
         \hline
        $2$ & $[8,4,4]$ \\
         \hline
         $3$ & $[8,6,2]$ \\
         \hline
        $4$ & $[8,7,2]$ \\
         \hline
         $5$ & $[8,1,8]$ \\
         \hline
    \end{tabular}
    \label{tab:my_table1}
\end{table}

\end{example}

 \begin{example}
 Similarly, we take $a=3$ and $q=5$ so that $d_2=4$ and length is $d_2(q-1)=4\cdot4=16$. Table \ref{tab:my_table2} gives the main parameters of the corresponding codes. 

 \begin{table}[ht]
    \centering
    \caption{a=3 and q=5 }
    \begin{tabular}{|c|c|}
    \hline
	{$\mathbf{\alpha}$ \cellcolor{gray!20}}& 
	{$[N, K, \delta]$ \cellcolor{gray!20}}\\

    \hline
         $0$ & $[16,1,16]$ \\
         \hline
       $1$ & $[16,2,12]$ \\
         \hline
        $2$ & $[16,3,8]$ \\
         \hline
         $3$ & $[16,5,4]$ \\
         \hline
        $4$ & $[16,6,4]$ \\
         \hline
         $5$ & $[16,7,4]$ \\
         \hline
         $6$ & $[16,9,3]$ \\
         \hline
       $7$ & $[16,10,3]$ \\
         \hline
        $8$ & $[16,11,3]$ \\
         \hline
         $9$ & $[16,13,2]$ \\
         \hline
        $10$ & $[16,14,2]$ \\
         \hline
         $11$ & $[16,15,2]$ \\
         \hline
    \end{tabular}
    \label{tab:my_table2}
\end{table}
\end{example}

\section{acknowledgement}
The authors would like to thank two anonymous reviewers for their careful reading and helpful suggestions which improved the presentation of the paper.

\end{document}